\newtheorem{Definition}{Definition}
\newtheorem{Example}{Example}
\newtheorem{Proposition}{Proposition}
\newtheorem{Theorem}{Theorem}
\newtheorem{Lemma}{Lemma}
\newtheorem{Corollary}{Corollary}	
\newtheorem{Remark}{Remark}
\newcommand{\R}{\mathbb{R}}
\newcommand{\scone}[3]{%
\begin{scope}[rotate=#3,xshift=#1,yshift=#2]
\def\mypath{ (.-.2,0) -- +(-.2,.8) arc (180:0:.8) -- +(-.2,-.8) arc (-180:0:.2) } 
\fill [gray] mypath;
}
\tikzset{%
  >=latex, 
  inner sep=0pt,%
  outer sep=2pt,%
  mark coordinate/.style={inner sep=0pt,outer sep=0pt,minimum size=3pt,
    fill=black,circle}%
}
\newsavebox{\sfe@box}
{\color@endgroup\egroup\subfloat[\sfe@caption]%
{\usebox{\sfe@box}}}
\begin{document}

\title{Singularities and global stability of decentralized formations in the plane}

\author{M.-A. Belabbas\thanks{33 Oxford St, Cambridge MA 02138}
}

\maketitle
\markboth{}%
{Shell \MakeLowercase{\textit{et al.}}}

\begin{abstract}
Formation control is concerned with the design of control laws that stabilize agents at given distances from each other, with the constraint that an agent's dynamics can depend only on a subset of other agents. When the information flow graph of the system, which encodes this dependency, is acyclic, simple control laws are known to globally stabilize the system, save for a set of measure zero of initial conditions.  The situation has proven to be more complex when the graph contains cycles; in fact,  with the exception of the cyclic formation with three agents, which is stabilized with laws similar to the ones of the acyclic case, very little is known about formations with cycles.  Moreover, all of the control laws used in the acyclic case  fail at stabilizing more complex cyclic formations. In this paper, we explain why this is the case and show that a large class of planar formations with cycles cannot be globally stabilized, even up to sets of measure zero of initial conditions. The approach rests on  relating the information flow  to  singularities in the dynamics of formations. These singularities are in turn shown to make the existence of  stable configurations that do not satisfy the prescribed edge lengths generic.

\end{abstract}
\begin{IEEEkeywords}
Formation control, Decentralized control, Global stability, Bifurcations, Singularities.
\end{IEEEkeywords}

\section{Introduction}

Consider the following problem, depicted in Figure~\ref{fig:triang1}. Three autonomous agents with positions $x_1,x_2$ and $x_3$  evolve in the plane according to first order dynamics, agent $1$ observes the position of agent $2$, agent $2$ the position of agent $3$ and agent $3$ the position of agent $1$. Can the agents stabilize at prescribed distances $d_1, d_2, d_3$ from each other?

Problems of this type, which fall under the broader class of decentralized control problems, have been a focal point of attention of control theory for the past decade or more, as they arise in a wide variety of natural (think schooling, herding, etc.) and engineering situations (think autonomous vehicles or decentralized power systems).  The vast majority of control laws proposed in these contexts are so-called \emph{gradient control laws}, named after the fact that every agent tries to minimize its own objective function. While such control laws work well in centralized systems with a small number of agents, they become ineffectual at stabilizing a given configuration when this number increases or when the system is decentralized due to the appearance of a very large number of stable configurations that do not respect the desired inter-agent distances. 
The objective of this paper is to address the effect of decentralization on the appearance of stable, undesired equilibrium configurations.  In order to do so, we will consider a broader class of control laws than gradient  laws and analyze a particular four-agents formation, called the 2-cycles. This four-agents formation  was exhibited in~\cite{cao2010festschrift} to illustrate the shortcomings of the current methods in formation control and decentralized systems, as it resisted attempts to either define globally stabilizing control laws or prove their non-existence.

We will use ideas from singularity and bifurcation theory~\cite{arnold_bifurcation,golub_stewart}  to show that the 2-cycles, and trivially systems containing it as a subformation, are not globally stabilizable. Singularities have not often appeared in the study of global control design, the reason behind this fact is that they are, in general, easily avoided by considering a small perturbation of the system and can thus be made irrelevant to the dynamics. By opposition, we will show here that \emph{the information flow constraints inherent to decentralized control can make such singularities unavoidable}.

\begin{figure}[ht]
\begin{center}
\subfloat[]{\label{fig:triang1}
\begin{tikzpicture}[scale = .5] 

\node [fill=black,circle, inner sep=1pt,label=90:$x_1$] (1) at ( 0, 0) {};

\node [fill=black,circle, inner sep=1pt,label=-135:$x_2$] (2) at (1.8 ,-1) {};
\node [fill=black,circle, inner sep=1pt,label=-45:$x_3$] (3) at (1.5 ,1.1) {};

\draw [-stealth ] (1) -- (2);
\draw [-stealth] (2) -- (3);
\draw [-stealth] (3) -- (1);

\end{tikzpicture}}\qquad
\subfloat[]{\label{fig:s2c}
\begin{tikzpicture}[scale = .5] 

\node [fill=black,circle, inner sep=1pt,label=180:$x_1$] (1) at ( 0, 0) {};
\node [fill=black,circle, inner sep=1pt,label=135:$x_2$] (2) at (1.5 ,1.5) {};
  \node [fill=black,circle, inner sep=1pt,label=-45:$x_3$] (3) at (3.5 , .5) {};
\node [fill=black,circle, inner sep=1pt,label=-45:$x_4$] (4) at ( 2,-.5) {};

\draw [-stealth,  ] (1) -- (2);
\draw [-stealth, ] (1) -- (4);
\draw [-stealth, ] (3) -- (1);
\draw [-stealth, ] (2) -- (3);
\draw [-stealth, ] (4) -- (3);
\end{tikzpicture}
}
\end{center}

\caption{\small (a). Three agent in a cyclic formation in the plane. Agent $1$ observes agent $2$, which observes agent $3$ which in turn observes agent $1$. An arrow pointing from agent $i$ to agent $j$ thus implies that the dynamics of agent $i$ is allowed to depend to the state of agent $j$.  (b). The 2-cycles formation.}\label{fig:3out}
\vspace{-.5cm}
\end{figure}
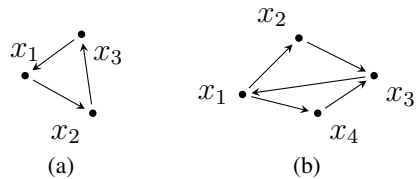

Going back to the example of  three agents in the plane, there are at least two ways in which such systems arise in practical situations:
\begin{itemize} 
\item  Every agent chooses independently the distance $d_i$ at which to stabilize from another agent. Can we design control laws so that when the agents evolve, they will all stabilize at their desired distances from almost all initial positions?

\item A central authority decides on a target configuration and sends each agent only part of the complete description of the target, e.g. by sending them only the inter-agent distance they have to satisfy. Can we design a control law that will stabilize every target configuration?

\end{itemize}

These two points of view of course lead to the same mathematical problem, though they arise in different contexts. In the first case, the question may arise in the study of natural flocks or non-cooperative settings, where agents are incapable or not wanting to communicate their objectives. In the second case, scenarios involving UAV's which are designed to achieve a global objective, but which for secrecy or efficiency reasons are only given part of the global objective, can give rise to such questions. For a more detailed discussion of local and global objectives of a decentralized system, see~\cite{belabbas_icca_knowns}.

We now summarize the extant relevant work and methods used, see~\cite{belabbas_icca_mathformation} for more details. We first mention that most issues in formation control arise when the information flow graph of the system contains cycles. Indeed, a cycle-free formation, which necessarily has a leader (i.e. an agent that does not observe any other agents) can  easily be handled via the use of different time-scales depending on how  distant from a leader an agent is. See~\cite{cao2008acyclic} for a  detailed sketch of how a gradient-based control law globally stabilizes, save for a set of initial conditions of measure zero, acyclic formations. For related wor, dealing with undirected formations, we refer to~\cite{krick08} and references therein.

\begin{Remark}We mention here that the existence of a set of initial conditions that will not lead to the desired configuration stems from the nontrivial topology of the state-space of formation control, essentially the topology of a complex projective space~\cite{belabbasSICOpart1}. We say that a control law \emph{almost surely stabilizes} an equilibrium if the system stabilizes that equilibrium from almost all initial conditions. We come back to this in Section~\ref{sec:stability}.
\end{Remark}

Local stabilization of formations that contain cycles has similarly been  investigated in~\cite{yu09}, where it is shown that the widely-used gradient law can be modified by adjusting some gains to stabilize any desired target configuration. One caveat to the result is that the gains are not evaluated locally by the agents, resulting in a centralized "design phase" followed by a decentralized "implementation phase" for the agents. It is shown in~\cite{belabbasSICOdecentralized} that the gains cannot be locally evaluated by the agents in the case of the 2-cycles formation, i.e. that a decentralized design phase is not possible.

This leaves us with the case of global stabilization of formations that contains cycles. The extant work consists of  the thorough analysis of the triangle formation, or 1-cycle, done in~\cite{cao07cdc} and related publications, where it is shown that the gradient control law almost surely globally stabilizes almost any configurations. We show in this paper that the second simplest formation with cycles, i.e. the 2-cycles formation,  cannot be globally stabilized---even modulo sets of measure zero of initial conditions---by a broad class of control laws,  including the control laws used in prior work.

In order to prove the main result, we introduce two definitions. First, the already mentioned \emph{almost sure stability}, in Section~\ref{sec:stability}, which is needed to formalize the idea of global stability modulo sets of measure zero which has appeared in prior work on formation control. The second is the one of robustness for nonlinear systems; in a few words, we say that a control law is robust  if its effect (e.g. stabilization) persists after small perturbations of the dynamics. This is closely related to structural stability of systems and requires some use of transversality conditions, discussed in the Appendix, to make rigorous. 

We conclude this introduction by an example: we illustrate how the gradient-based control law  used in many works on directed formation control~\cite{krick08,yu09,cao07cdc} fails to globally stabilize the 2-cycles formation by stabilizing around an undesired configuration.

Let $x_i \in \R^2$ be the positions of the agents and $d_i$ be positive numbers, the target edge lengths. The decentralized system is explicitly given by:
\begin{equation}\label{eq:symEx}
\left\lbrace \begin{array}{rcl}
\dot x_1 &=& (\|x_1-x_2\|-d_1)(x_1-x_2) +(\|x_1-x_4\|-d_5)(x_1-x_4) \\
\dot x_2 &=& (\|x_2-x_3\|-d_2)(x_2-x_3)\\
\dot x_3 &=& (\|x_3-x_1\|-d_3)(x_3-x_1)\\
\dot x_4 &=& (\|x_4-x_3\|-d_4)(x_4-x_3)
\end{array}\right.
\end{equation}

We show in Figure~\ref{fig:simuxx} the results of simulations  for the vector of target distances $d_1=2.0, d_2= 2.6, d_3= 2.0, d_4=1.4, d_5= 3.3$ and illustrate the appearance of a stable undesired configuration, which is accompanied by an unstable desired configuration.

\begin{figure}[ht]
\begin{center}
\subfloat[Configuration $D1$]{
\includegraphics[width=.25\columnwidth]{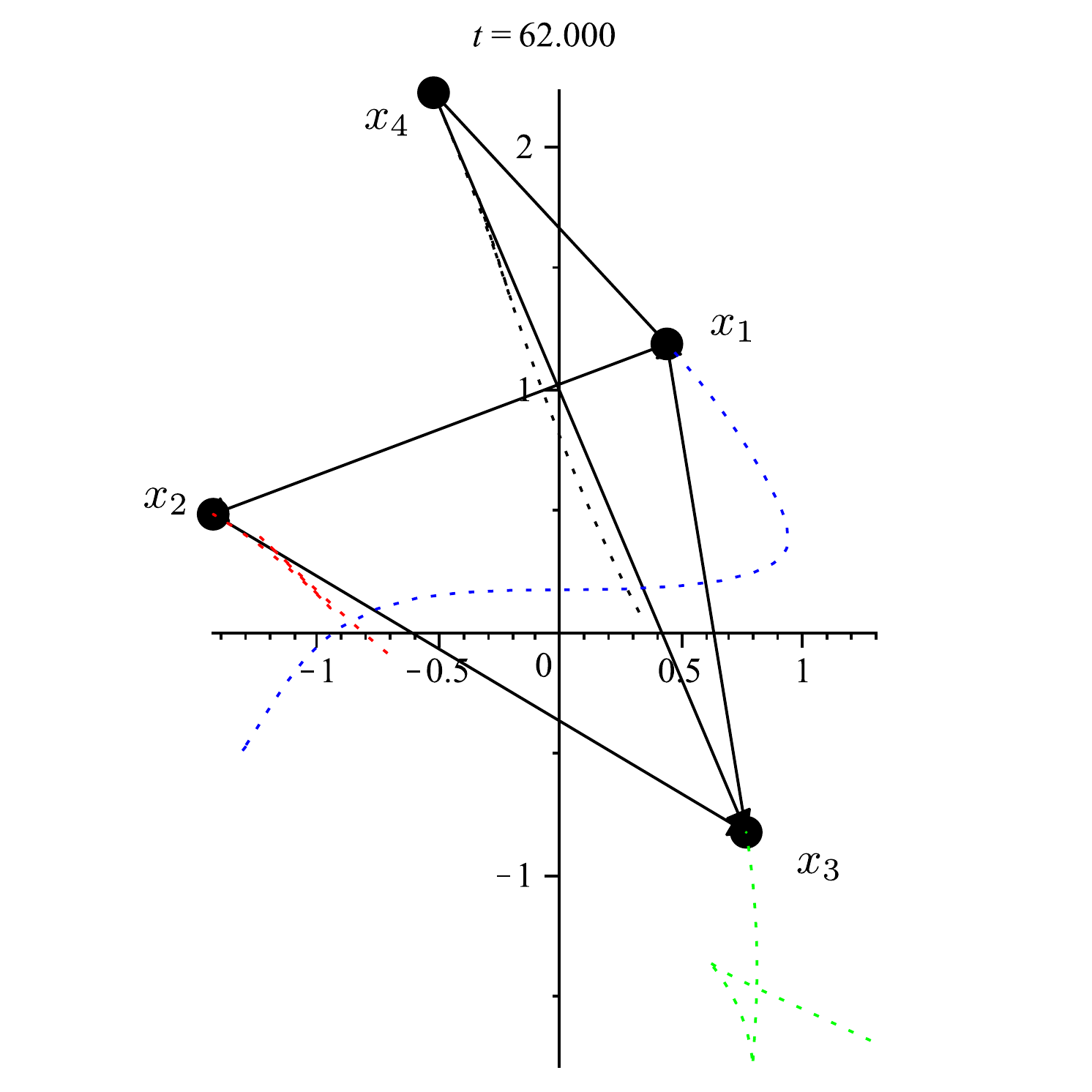}\label{fig:sim1d1}
}
\subfloat[Configuration $D2$]{
\includegraphics[width=.25\columnwidth]{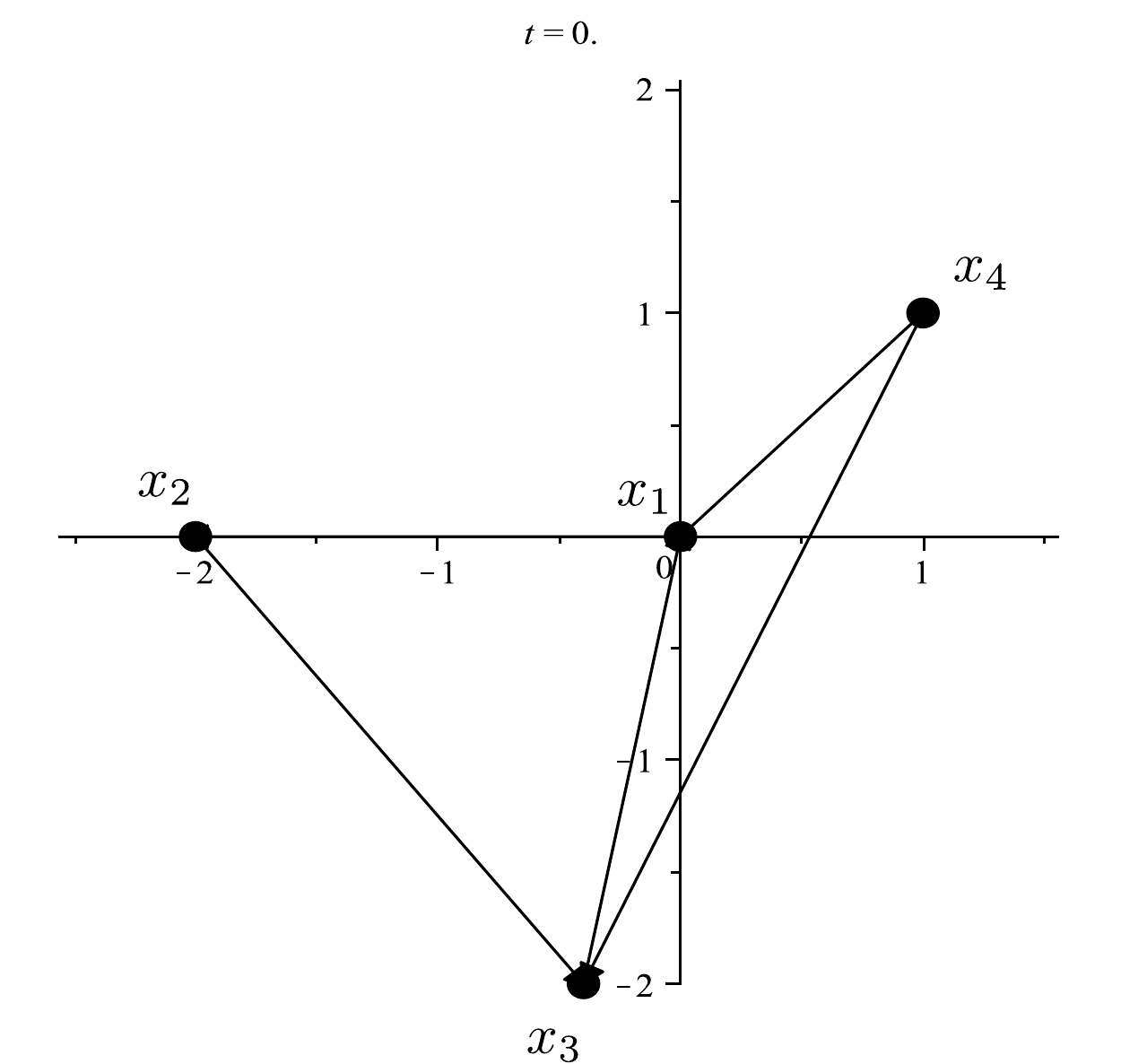}\label{fig:sim1d2}
}
\subfloat[Configuration $A1$]{
\includegraphics[width=.25\columnwidth]{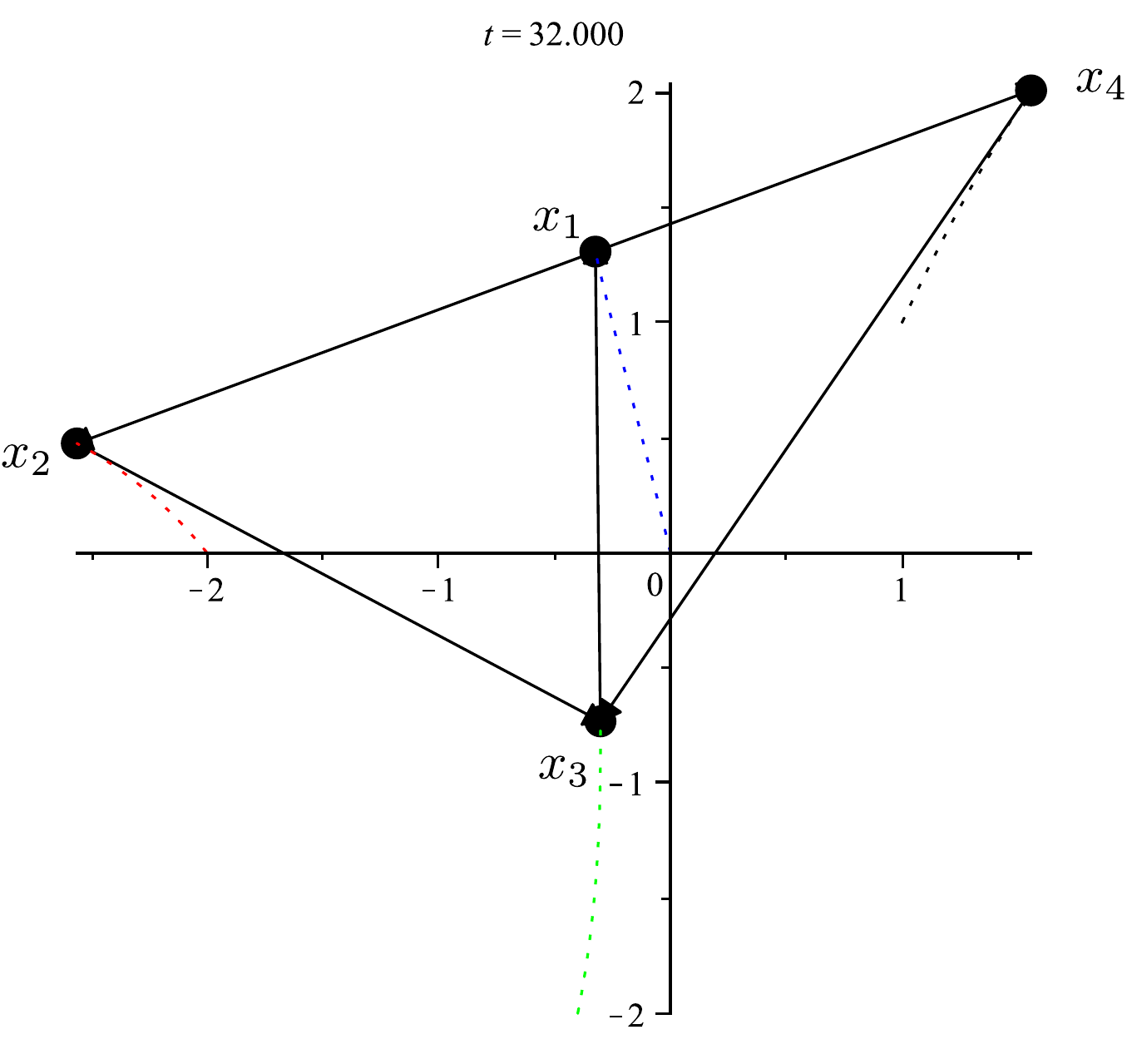}\label{fig:sim1a1}
}
\caption{Simulation results for the decentralized system of Equation~\eqref{eq:symEx} with a $d \in \mathcal{L}_c$. The dotted lines represent the trajectories followed by the agents. Both configurations $D_1$ and $D_2$ represent design target frameworks. $D_1$ is locally stable whereas $D_2$ is locally unstable.  The configuration $A_1$ is also an equilibrium of the dynamics, though one for which the target distances are not respected: we call it \emph{ancillary equilibrium}.
 A linearization of the system gives that the spectra of the Jacobians  are given by $(-17.5 + 1.3i, -17.5 - 1.3i, -11.9, -7.9, -0.6)$, $( 0.6,-18.6 + 3i, -18.6- 3i, -9.4 + 3.1i, -9.4 - 3.1i)$ and $( -23.4 + 4.8i, -23.4 - 4.8i, -11 + 2.8i, -11 - 2.8i,-1.6  )$ for the configurations $D1$, $D2$, and $A1$ respectively. Hence $A1$ is locally exponentially stable and thus there is an open set of initial conditions that lead to an ancillary configuration. }
\label{fig:simuxx}
\end{center}
\end{figure}

\section{Almost sure stability}\label{sec:stability}

Consider the control system \begin{equation}
\label{eq:sysu1}\dot x = f(x,u(x))
\end{equation} where $x \in {M}$, a smooth manifold, and all functions are assumed smooth. To justify the definition of almost sure stability, we restrict ourselves to the case of hyperbolic dynamics, i.e. having the property that the linearization of the dynamics at a zero has eigenvalues with non-zero real-parts.

We are interested in  \emph{global} results about stabilization of an equilibrium configuration. From~\cite{belabbasSICOpart1}, we know that because of their invariance to rigid transformations of the plane, formation control systems evolve on the manifold $M= \mathbb{C}P(n-2) \times \R$, which is of dimension $2n-3$. The well-known Poincar\'e-Hopf equality, which we illustrate in the case of the circle in Figure~\ref{fig:poincare}, relates the \emph{index} of isolated zeros of differentiable vector fields on $M$ to a global topological characteristic of $M$, its Euler characteristic~\cite{guck}.

On the one-hand, the Euler characteristic of $M$ is known to be $n-1$. For $x_0$ an isolated zero of a vector field, the useful formula $$\operatorname{ind}(x_0) = \operatorname{sign}\left(\det(\frac{\partial f}{\partial x}|_{x_0})\right)$$ tells us that  a stable  equilibrium has an index of $-1$ (since the dimension of $M$ is odd). Putting these two simple observations together, we conclude that global stabilization of an equilibrium is not possible for formation control, since stabilizing a zero will force the appearance  of other zeroes $a_i$ to satisfy the Poincar\'e-Hopf equality: $$\operatorname{ind}(x_0) + \sum_i \operatorname{ind}(a_i)= n-1 \Rightarrow \sum_i \operatorname{ind}(a_i) = n.$$

\begin{figure}
\begin{center}
\begin{tikzpicture}[decoration={
    markings,
    mark=at position 0.5 with {\arrow{>}}}
    ] 
\draw[postaction={decorate}] (-3.5,1) arc (90:-90:1cm);
\draw[postaction={decorate}] (-3.5,-1) arc (-90:-270:1cm);


\draw[postaction={decorate}] (1,0) arc (0:-90:1cm);

\draw[postaction={decorate}] (-0,1) arc (90:-0:1cm);
\draw[postaction={decorate}] (-0,1) arc (90:180:1cm);
\draw[postaction={decorate}] (-1,0) arc (180:270:1cm);
\node [fill=black,circle, inner sep=1pt,label=90:$1$] (1) at ( 0, 1) {};
\node [fill=black,circle, inner sep=1pt,label=-90:$-1$] (1) at ( 0, -1) {};


\node [fill=black,circle, inner sep=1pt,label=90:$-1$] (1) at ( 3.5, 1) {};
\node [fill=black,circle, inner sep=1pt,label=0:$1$] (1) at ( 4.5, 0) {};
\node [fill=black,circle, inner sep=1pt,label=-90:$-1$] (1) at ( 3.5, -1) {};
\node [fill=black,circle, inner sep=1pt,label=180:$1$] (1) at ( 2.5, 0) {};
\draw[postaction={decorate}] (4.5,0) arc (0:90:1cm);
\draw[postaction={decorate}] (4.5,0) arc (0:-90:1cm);
\draw[postaction={decorate}] (2.5,0) arc (180:90:1cm);
\draw[postaction={decorate}] (2.5,0) arc (180:270:1cm);
\end{tikzpicture}

\end{center}
\caption{We represent three continuous vector fields on the circle. The one on the left has no equilibrium, the one in the center has one stable equilibrium and one unstable equilibrium and the one on the right has two stable equilibria and two unstable equilibria; the indices of the equilibria are indicated on the figure. Because the sum of the indices is constrained to be zero, there is an even number of equilibria and, in particular, no continuous system on the circle can be globally stable.}\label{fig:poincare}
\end{figure}
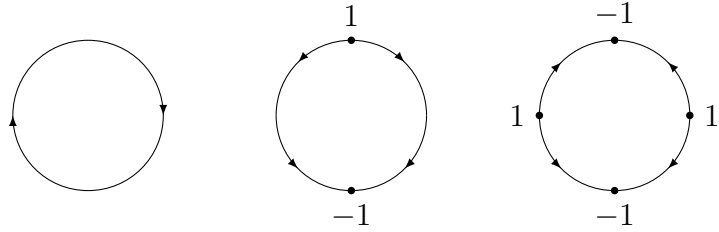

On the other hand, we know from the Hartman-Grobman theorem that the dimensions of the stable and unstable manifolds of an equilibrium  are given by the numbers of eigenvalues of the Jacobian with negative and positive real parts respectively.  The attraction basin of an equilibrium is thus of codimension at least one (in other words, it is a thin set) unless all its eigenvalues have negative real parts.

Hence, from a practical standpoint, if we  only require that the control $u(x)$  makes  one equilibrium stable and all other equilibria either saddles or unstable, we obtain a  system that behaves essentially like a  globally stable system---since a vanishingly small perturbation would ensure that the system, if at a saddle or unstable equilibrium,  evolves to the unique stable equilibrium---while leaving more room to possibly satisfy  the Poincar\'e-Hopf equality. We formalize and elaborate on this observation here. 

Let $\mathcal{E}_d$ be a finite subset of $M$ containing  configurations that we would like to stabilize via feedback.  All configurations in $\mathcal{E}_d$ are equally appropriate for the stabilization purpose. We are thus interested in the design of a smooth feedback control $u(x)$ that will stabilize the system to any point $x_0\in \mathcal{E}_d$. We call these points the \emph{design targets} or \emph{design equilibria}:

$$\mathcal{E}_d = \lbrace x_0 \in M \mbox{ s.t. } x_0 \mbox{ is a design equilibrium} \rbrace $$

Let $$\mathcal{E}  = \lbrace x_0 \in M \mbox{ s.t. } f(x_0,u(x_0)) = 0 \rbrace, $$ the set of equilibria of~\eqref{eq:sysu1}. We  assume that $\mathcal{E}$ is {finite}. 

As explained above, when the system evolves on a non-trivial manifold, the Poincar\'e-Hopf equality, or the more refined Morse inequalities~\cite{smale1967}, make it unreasonable  to expect that there exists a control $u(x)$ that makes the design equilibria the \emph{only} equilibria of the system, i.e. such that $\mathcal{E}_d= \mathcal{E}$.  We call the additional equilibria, that are introduced by the non-trivial topology of the space, \emph{ancillary equilibria}: $$\mathcal{E}_a = \mathcal{E}-\mathcal{E}_d.$$

\begin{figure}
\begin{center}
\subfloat[]{
\begin{tikzpicture} 
\begin{scope}
\def\R{1.5} 
\def\angEl{35} 
\filldraw[ball color=white] (0,0) circle (\R);

\coordinate[draw, shape = circle, fill, inner sep =  1pt] (D1) at (.4*\R, .7*\R);
\node[below=3pt] at (D1) {\tiny $D_1$};
\node at (-1.24,1.24) {$M$};

\coordinate[draw, shape = circle, fill, inner sep =  1pt] (D2) at (-.4*\R, -.7*\R);
\node[above=3pt] at (D2) {\tiny $D_2$};
\end{scope}
\end{tikzpicture}
} \qquad\quad
\subfloat[]{
\begin{tikzpicture}
\begin{scope}[xshift = 0cm]
\node at (-1.24,1.24) {$M$};

\def\R{1.5} 
\def\angEl{35} 
\filldraw[ball color=white] (0,0) circle (\R);

\coordinate[draw, shape = circle, fill, inner sep =  1pt] (D1) at (.4*\R, .7*\R);
\node[below=3pt] at (D1) {\tiny $D_1$};
\coordinate[draw, shape = circle, fill, inner sep =  1pt ] (D2) at (-.4*\R, -.7*\R);
\node[above=3pt] at (D2) {\tiny $D_2$};

\coordinate[draw, shape = circle, , inner sep =  1pt, red] (A1) at (.3*\R,- .7*\R);
\node[above=3pt] at (A1) {\tiny $A_1$};
\coordinate[draw, shape = circle, , inner sep =  1pt, red] (A2) at (-.3*\R, .7*\R);
\node[below=3pt] at (A2) {\tiny $A_2$};

\end{scope}
\end{tikzpicture}

}

\caption{Consider a control system defined on the sphere $M$. Assume that we want to design a control law such that the system stabilizes almost surely to either $D_1$ or $D_2$ as depicted in (a) above. We call  $D_1$ and $D_2$ design equilibria and write $\mathcal{E}_d = \lbrace D_1, D_2 \rbrace$.  A continuous feedback control law $u(x)$ on the sphere that has either $D_1$ or $D_2$, or both, as zeros may introduce additional zeros, which we call ancillary equilibria. We assume that there are two such equilibria $A_1$ and $A_2$ and write $\mathcal{E}_a = \lbrace A_1, A_2\rbrace. $ The control law $u(x)$ makes the system almost surely stable if {\it at least one element} of $\mathcal{E}_d$ is stable and {\it no element} in $\mathcal{E}_a$ is stable. }
\end{center}
\end{figure}
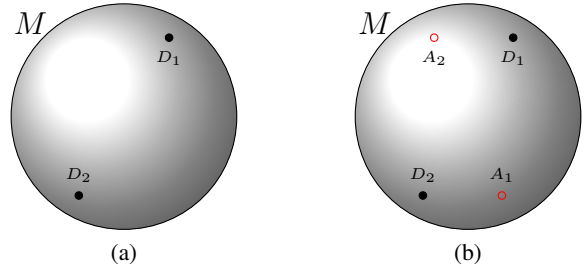

 We decompose the set $\mathcal{E}$ into \emph{stable} equilibria, by which we mean  equilibria such that \emph{all the eigenvalues} of the linearized system have a negative real part, and \emph{unstable equilibria}, where \emph{at least one eigenvalue} of the linearization has a positive real part. Observe that under this definition,   saddle points are considered unstable. 
In summary: $$\mathcal{E} = \mathcal{E}_s \cup \mathcal{E}_u$$ where
$$\mathcal{E}_s = \lbrace x_0 \in \mathcal{E}\ |\ x_0 \mbox{ is stable} \rbrace \mbox{ and } \mathcal{E}_u = \lbrace x_0 \in \mathcal{E}\  |\ x_0 \mbox{ is unstable} \rbrace.$$

With these notions in mind, we introduce the following definition:

\begin{Definition}
Consider the smooth control system $\dot x = f(x,u(x))$ where $x \in M$ and the set $\mathcal{E}$ of equilibria of the system is finite. Let $\mathcal{E}_d \subset M$ be a finite set. We say that $\mathcal{E}_d$ is
\begin{enumerate}
\item \emph{feasible} if we can choose a smooth $u(x)$ such that $\mathcal{E}_d \cap \mathcal{E} \neq \varnothing$.
\item \emph{almost-surely stabilizable} if we can choose a smooth $u(x)$ such that $\mathcal{E}_s \subset \mathcal{E}_d$.

\end{enumerate}
When the set $\mathcal{E}_d$ is clear from the context, we say that the system is feasible or almost-surely stable.\footnote{In the earlier publication~\cite{belabbascdc11sub2}, we termed almost-sure stability type-A stability}
\end{Definition}

The set $\mathcal{E}_d$ is feasible if we can choose $u(x)$ such that \emph{at least one} equilibrium  of the system is a design target. It is said to be \emph{almost surely} stable if the system  stabilizes to $\mathcal{E}_d$ almost surely for all initial conditions on $M$.
The usual notion of global stability is a particular instance of almost-sure stability; indeed, it  corresponds to having $u(x)$ such that  $\mathcal{E}_d=\mathcal{E}=\mathcal{E}_s$. 

Looking at  the contrapositive of this definition,  a system  is \emph{not almost-surely stable} if there exists a set of initial conditions of codimension zero that leads to an ancillary equilibrium.

\begin{Example}
Consider a system $$\dot x = x(1-kx^2)$$ where $k \in \R$ is a feedback parameter to be chosen by the user. We show that any $\mathcal{E}_d \subset (0,\infty)$ is not almost-surely stable. We first observe that the system has an equilibrium at $0$ and two equilibria at $x = \pm \sqrt{1/k}$ if $k >0$.  The system is thus feasible for any $\mathcal{E}_d \subset \R$.  The Jacobian of the system is $1$ at $x=0$ and $-2$ at $x=\pm \sqrt{1/k}$. For $k>0$, the above says that $$\mathcal{E}= \lbrace 0, \pm \sqrt{1/k} \rbrace= \underbrace{\lbrace \sqrt{1/k} \rbrace}_{\mathcal{E}_d} \cup \underbrace{ \lbrace 0, -\sqrt{1/k} \rbrace}_{\mathcal{E}_a}.$$  From the linearization of the system, we have that $$\mathcal{E}_s = \lbrace \pm \sqrt{1/k}\rbrace \mbox{ and }\mathcal{E}_u = \lbrace 0 \rbrace.$$ We conclude that $\mathcal{E}_s \nsubseteq \mathcal{E}_d$ and the system is not almost-surely stable. Indeed, all initial conditions $x_0 < 0$ result in the system stabilizing at an ancillary equilibrium.
\end{Example}

\section{Genericity and robustness}\label{sec:sing}

We now move on to the first of the two main technical ingredients necessary for the proof of the results below. The second ingredient, singularities and bifurcations, is presented in the next section.

Let $\mathcal P$ be a binary-valued function on a topological space $S$, indicating whether a given property is satisfied. In more detail, if $u \in S$, we say that $u$ satisfies $\mathcal P$ is $\mathcal P(u) =1$. We have the following definition:

\begin{Definition}[Robustness] An element $u$ of a topological space $S$ satisfies the  property $\mathcal{P}$  \emph{robustly} if for all $u'$  in a neighborhood of $u$ in $S$ we have $\mathcal P(u')=1 $. A property $\mathcal P$ is robust if there exists an open set $U \subset S$ such that $\mathcal P(U) =1 $. 
\end{Definition}

The property we will be dealing with here is stability: we want to find a $u$ that stabilizes a  system around an equilibrium, and desire the stabilization to be robust. In practical terms, if a property satisfied only at \emph{non-robust} $u$'s, then it fails to be satisfied under the slightest error in modelling or measurement. 

Related to robustness is the notion of genericity: a property $\mathcal{P}$ is \emph{generic} for a topological space $S$ if it is true on an everywhere dense intersection of open sets of $S$.

Everywhere dense intersections of open sets are sometimes  called \emph{residual} sets~\cite{arnold_bifurcation}.

\begin{Remark}
We emphasize that when we seek a robust control law $u(x)$ for stabilization, we seek a control law such that  the equilibrium that is to be stabilized remains stable under small perturbations in $u(x)$. The equilibrium, however, may move in the state space. For example, assume that the system $$\dot x = f(x,u(x))$$ has the origin as a stable equilibrium. If for all $\tilde u(x)$ in an appropriate set of perturbations, the system $$\dot x=f(x,u(x) + \varepsilon \tilde u(x))$$ has a stable equilibrium at a point $z(\varepsilon )$ near the origin, then the control law $u(x)$ is robust. If, on the contrary, the equilibrium disappears or becomes unstable, then $u(x)$ is not robust.
\end{Remark}

If $1-\mathcal P$, the negation of $\mathcal P$, is generic, then there is no robust $u$ that satisfies $\mathcal P$. Indeed, if $1- \mathcal P$ is generic, then $\mathcal{P}$ is verified on at most a nowhere dense closed set. In particular, $\mathcal P$ is not verified on an open set. The main tool to handle genericity are jet spaces and Thom transversality theorem. We will use the results in some parts below and refer the reader to the appendix for more information.

\section{Singularities, transcritical bifurcation and the logistic equation}\label{sec:logistic}

We recall a few definitions from dynamical systems theory. Consider a dynamical system of the form\begin{equation}
\label{eq:defeqm}
\dot x = f_\mu(x)\end{equation}where $x \in {M} $, an $n-$dimensional manifold, and $\mu\in \R^k$ is a vector of parameters on which the system smoothly depends.

\begin{Definition}[Hyperbolic and singular equilibria and bifurcation value]\hskip 0pt
\begin{enumerate} \item An equilibrium $x_0$ is called \emph{hyperbolic} if the eigenvalues of the linearization at $x_0$ have  non-zero real-parts. It is called \emph{singular} or \emph{degenerate} otherwise.
\item  A value $\mu_0$ in the parameter space $\R^k$ for which the flow of~\eqref{eq:defeqm} has a singular equilibrium  is called a \emph{bifurcation value}.
\end{enumerate}
\end{Definition}

\subsection{The logisitc equation}

The logistic equation, which is  often used to describe systems in which two competing effects---such as supply and demand   or predator and prey--- are at play, is the one-dimensional ODE given by \begin{equation}
\label{eq:logeq}
\dot x = x(\mu-x).
\end{equation} This equation displays what is called a \emph{transcritical} or \emph{transfer of stability} bifurcation at $\mu=0$, which we explain here. Observe that the system has two equilibria, one at $x=0$ and one at $x=\mu$, which coalesce when $\mu=0$. The linearization of the system about $x$ is $$\frac{\partial f}{\partial x}= (\mu-x)-x=\mu-2x.$$ From this linearization, we see that for $\mu >0$, the equilibrium $ x=0$ is unstable whereas the equilibrium $x=\mu$ is stable. The situation is reversed for $\mu<0$.  We conclude that at the bifurcation value $\mu=0$, the two equilibria coalesce and \emph{exchange their stability properties}. We depict the above in Figure~\ref{fig:logeq}. This figure is to be compared to Figure~\ref{fig:log4eq}.

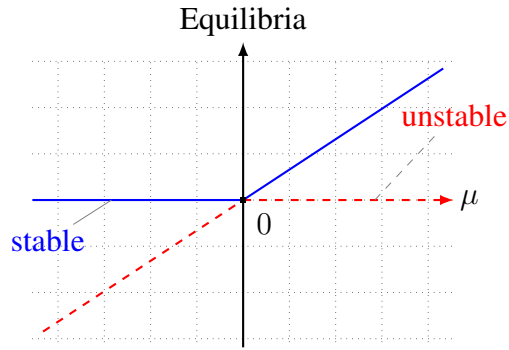
\begin{figure}[]
\begin{center}
\begin{tikzpicture}[scale=.7]
\draw[dotted, step=25pt,very thin](-4,-2.7) grid (4,2.7);

\draw[thick, blue] (-4,0) -- (0,0) node (-2.5,0) [pin=240:stable]{};
\draw[dashed, thick, red,->] (0,0) -- ( 4,0); \node [black] at  (4.3,0) {$\mu$};

\draw[thick, black,->] (0,-2.8) -- (0,3) ;\node  at (0,3.4) {Equilibria};

\draw[dashed, thick, red] (-3.8,-2.5) -- (0,0) node (2.5,0) [pin=60:unstable]{};
\draw[thick, blue] (0,0) -- ( 3.8,2.5) ;
\node [fill=black,inner sep=1pt,label=-45:$0$] at (0,0) {}; 

\end{tikzpicture}
\caption{\small The logistic equation undergoes a transcritical bifurcation when $\mu=0$. The equilibrium $x=0$ is stable for $\mu<0$ and unstable for $\mu>0$.}\label{fig:logeq}
\vspace{-.5cm}
\end{center}

\end{figure}

We   show below  that   the 2-cycles behaves similarly to the logistic equation in the sense that they both exhibit the same type of singularities or bifurcation. The most common approach used to gain some understanding about the behavior of a dynamical system near a singularity relies on the use of the  \emph{center manifold theorem}~\cite{guck}. This theorem establishes the existence of a nonlinear change of coordinates, valid near the equilibrium, where the dynamics can be put in a so-called normal form which is more amenable to analysis. The logistic equation as given in Equation ~\ref{eq:logeq} is such a normal form. This approach is without much hope for success for our purpose unless the control law $u$ is fixed. Indeed, the change of variables involved in the analysis   depends on the  control $u$, and tracking the effect of this dependence through the whole procedure is not feasible for broad classes of control laws. 

In order to sidestep this difficulty, we have recourse to the following result of Sotomayor~\cite{sotomayor73}, which characterizes the generic behavior of dynamical systems near non-hyperbolic fixed-points \emph{without recourse} to the center manifold.

First, recall that for $f:\R^2 \rightarrow \R^2$  a twice differentiable function, its Jacobian is defined as $$\frac{\partial f}{\partial x} = \left[\begin{array}{cc} \frac{\partial f_1}{\partial x} & \frac{\partial f_1}{\partial y}\\ \frac{\partial f_2}{\partial x} & \frac{\partial f_2}{\partial y} \end{array}\right] .$$
Assuming that the Jacobian  has a zero eigenvalue, we denote by $v$ and $w$   corresponding right and left eigenvectors. The Hessian of $f$ is a $2 \times 2 \times 2$ tensor with entries $$\left(\frac{\partial f}{\partial x^2}\right)_{ijk} = \frac{\partial f_i}{\partial x_j\partial x_k}.$$ Hence $$w^T\frac{\partial^2 f}{\partial x^2}(v,v) = \sum_{ijk} w_i \frac{\partial f_i}{\partial x_j\partial x_k} v_jv_k.$$ We have the following theorem:

\begin{Theorem}[Sotomayor]\label{th:soto}
Let $\dot x = f_\mu(x)$ be an ODE in $\R^n$ depending on a scalar parameter $\mu$, with $f$ twice differentiable in $x$ and $\mu$. For $\mu=\mu_0$, assume that the system has an equilibrium $x_0$ satisfying the following conditions:
\begin{enumerate}
\item $\frac{\partial f_{\mu_0}}{\partial x}|_{x_0}$ has a unique zero eigenvalue with left and right eigenvectors $w$ and $v$ respectively. The other eigenvalues are negative.
\item $w^T\frac{\partial f_{\mu}}{\partial \mu}|_{x_0,\mu_0}v = 0$
\item $w^T\frac{\partial^2 f_{\mu_0}}{\partial x^2}|_{x_0}(v,v) \neq 0$ and $w^T\frac{\partial^2 f_{\mu}}{\partial x \partial \mu}|_{x_0,\mu_0}v \neq 0$
\end{enumerate}
Then the phase portrait is topologically equivalent to the phase portrait of the logistic equation, i.e. we have a transcritical bifurcation about $x_0$ for $\mu=\mu_0$. Thus around $\mu=\mu_0$, there are two arcs of equilibria whose stability properties are exchanged when passing through $\mu_0$. Moreover, the set of equations $\dot x = f_\mu(x)$ which satisfy conditions $(1), (2)$ and $(3)$ above is generic in the space of smooth one-parameter families of vector fields with an equilibrium at $x_0$, $\mu_0$  with a zero eigenvalue.
\end{Theorem}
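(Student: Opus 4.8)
The plan is to reduce the $n$-dimensional equilibrium problem to a scalar one by a Lyapunov--Schmidt reduction, read off the bifurcation from the reduced equation, and then lift the conclusion back to $\R^n$ using the reduction principle. Write $A = \frac{\partial f_{\mu_0}}{\partial x}|_{x_0}$. By condition (1), $A$ has a simple zero eigenvalue with right null vector $v$ and left null vector $w$, and all remaining eigenvalues are negative; hence $\R^n = \langle v\rangle \oplus \im A$, with $\im A = \{z : w^{T} z = 0\}$, the spectral projection $Q$ onto $\im A$ is well defined and commutes with $A$, and $A|_{\im A}$ is invertible. Writing a point near $x_0$ as $x = x_0 + s v + y$ with $y \in \im A$, the equilibrium condition $f_\mu(x)=0$ splits into the range equation $Q f_\mu(x) = 0$ and the one-dimensional equation $w^{T} f_\mu(x)=0$. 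The range equation has invertible linearization $QA|_{\im A}$ in $y$ at the base point, so the implicit function theorem yields a smooth solution $y = y(s,\mu)$ with $y(0,\mu_0)=0$; substituting it back reduces the whole problem to the scalar \emph{bifurcation function} $g(s,\mu) := w^{T} f_\mu\big(x_0 + s v + y(s,\mu)\big)$, whose zeros are in one-to-one correspondence with the equilibria of the family near $(x_0,\mu_0)$.

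Next I would compute the low-order Taylor coefficients of $g$ at $(0,\mu_0)$. Using $w^{T}A = 0$ together with $y_s(0,\mu_0)=0$ (which follows by differentiating the range equation and using $Av=0$), one finds $g(0,\mu_0)=0$, $g_s(0,\mu_0)=0$, and $g_\mu(0,\mu_0)=w^{T}\frac{\partial f_\mu}{\partial\mu}=0$ by condition (2), so that $(0,\mu_0)$ is a degenerate zero. The pure second derivative is clean, $g_{ss}(0,\mu_0)=w^{T}\frac{\partial^2 f_{\mu_0}}{\partial x^2}(v,v)\neq 0$ by condition (3), while the mixed derivative $g_{s\mu}(0,\mu_0)$ equals $w^{T}\frac{\partial^2 f_\mu}{\partial x\partial\mu}v$ up to a correction produced by the parameter dependence of the reduced variable $y$; condition (3) is the nondegeneracy that keeps it nonzero. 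The $2\times 2$ Hessian of $g$ at $(0,\mu_0)$ thus has nonzero $ss$- and $s\mu$-entries; checking that it is indefinite shows that the zero set $\{g=0\}$ consists locally of two smooth arcs crossing transversally, which is exactly the transcritical configuration, and a smooth change of coordinates in $(s,\mu)$ puts $g$ into the normal form of Equation~\eqref{eq:logeq}.

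With the reduced picture in hand, topological equivalence follows from the reduction (Shoshitaishvili) principle: because the $n-1$ non-central eigenvalues all have negative real part, the local flow of the family is topologically equivalent to the product of the one-dimensional center dynamics $\dot s = g(s,\mu)$ with the standard attracting node $\dot y = -y$. Since the former is the logistic flow and the latter is topologically trivial, the phase portrait is topologically equivalent to that of the logistic equation. The exchange of stability is then read off directly: along each of the two arcs of equilibria the single potentially-vanishing eigenvalue is, to leading order, $g_s$ evaluated on the arc, and $g_{ss},g_{s\mu}\neq 0$ force this eigenvalue to change sign as $\mu$ crosses $\mu_0$, as in Figure~\ref{fig:logeq}, while the remaining eigenvalues stay negative by continuity.

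For the genericity clause I would work in the space of one-parameter families having an equilibrium at $(x_0,\mu_0)$ with a zero eigenvalue and express conditions (1) and (3) as complements of closed ``bad'' sets cut out in a suitable jet bundle: simplicity of the zero eigenvalue, negativity of the remaining spectrum, and the non-vanishing of the two scalar contractions are each described by the non-vanishing of finitely many polynomial functions of the $2$-jet of $f$ at $(x_0,\mu_0)$. Thom's transversality theorem (Appendix) then makes each hold on a residual set, and a finite intersection of residual sets is residual. \textbf{The main obstacle} is twofold. First, the careful derivative bookkeeping in the reduction: one must verify that every cross term generated by $y(s,\mu)$ and by $y_\mu$ either is annihilated by $w^{T}A=0$ or contributes only a correction that does not alter the sign conditions, so that the reduced coefficients are governed by the invariantly defined quantities $w^{T}\frac{\partial^2 f_{\mu_0}}{\partial x^2}(v,v)$ and $w^{T}\frac{\partial^2 f_\mu}{\partial x\partial\mu}v$. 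Second, formulating the ambient space for the genericity claim correctly, since condition (2) is an \emph{equality} and hence not generic on its own---it must be read as part of the defining stratum (the transcritical degeneracy, as opposed to the saddle-node case $w^{T}\frac{\partial f_\mu}{\partial\mu}\neq 0$), within which conditions (1) and (3) are the genuinely generic nondegeneracy requirements.
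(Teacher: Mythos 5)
The paper does not prove Theorem~\ref{th:soto}: it is recalled verbatim as a result of Sotomayor~\cite{sotomayor73} and used as a black box in the proof of Theorem~\ref{th:tech1}, so there is no in-paper argument to compare yours against. Judged on its own terms, your reconstruction follows the standard route for theorems of this type --- Lyapunov--Schmidt reduction to a scalar bifurcation function $g(s,\mu)$, analysis of its $2$-jet at the degenerate zero, and the reduction (Shoshitaishvili) principle to lift topological equivalence back to $\R^n$ using the hyperbolicity of the remaining spectrum --- and the outline is sound, including your reading of condition (2) as a stratum constraint rather than a generic condition.

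Two steps are genuinely incomplete as written. First, the reduced mixed partial is $g_{s\mu}(0,\mu_0)=w^T\frac{\partial^2 f}{\partial x\,\partial\mu}v+w^T\frac{\partial^2 f}{\partial x^2}(v,y_\mu)$ with $y_\mu=-(A|_{\im A})^{-1}Q\,\frac{\partial f}{\partial\mu}$, and hypothesis (3) controls only the first summand; saying that ``condition (3) is the nondegeneracy that keeps it nonzero'' asserts rather than proves that the correction term does not cancel it, and this is precisely the bookkeeping you defer. Second, your conclusion that the zero set of $g$ is two transversally crossing arcs requires the full $2\times 2$ Hessian of $g$ at $(0,\mu_0)$ to be \emph{indefinite}, i.e.\ $g_{ss}g_{\mu\mu}-g_{s\mu}^2<0$; this does not follow from $g_{ss}\neq 0$ and $g_{s\mu}\neq 0$ alone, so the coefficient $g_{\mu\mu}$ (which involves $\frac{\partial^2 f}{\partial\mu^2}$ and further $y_\mu$ cross terms) must be brought under control before the Morse-lemma step and the normal-form reduction to Equation~\eqref{eq:logeq} can be carried out. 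Neither point invalidates the strategy --- they are exactly the places where a faithful proof of the cited theorem must do real work --- but as it stands the proposal assumes the answers rather than deriving them.
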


\section{Formation Control}\label{sec:formcontrol}

Let $G=(V,E)$ be a \emph{graph} with $n$ vertices --- that is $V = \lbrace v_1,v_2,\ldots,v_n \rbrace$  is an ordered set of vertices and $E \subset V \times V$ is a set of edges. The graph is said to be \emph{directed} if $(v_i,v_j) \in E$ does not imply that  $(v_j,v_i) \in E$.  We let $|E| = m $ be the cardinality of $E$. We call the \emph{outvalence} of a vertex the number of edges originating from this vertex.

Directed graphs are used to encode the \emph{information flow} in decentralized control problems. We follow the convention that an arrow leaving vertex $v_i$ for vertex $v_j$ means that agent $i$ measures the relative position---relative to its own location--- of agent $j$.

Assume that the edges are ordered. The mixed-adjacency matrix of a graph $G=(V,E)$ is a $|E| \times |V|$ matrix whose entry $(i,j)$ is $-1$ if edge $e_i$ originates from vertex $v_j$, $1$ if edge $e_i$ ends at vertex $v_j$ and $0$ otherwise:

\begin{Definition}[Mixed adjacency matrix] 
Given a directed graph $G=(V,E)$, its mixed adjacency matrix $A_m \in \R^{n \times m} $ is defined by 
$$A_{m,ij} = \left\lbrace \begin{matrix} -1 &\mbox{ if } e_i=(v_j,v_s),  v_s \in V \\ +1 &\mbox{ if } e_i=(v_k,v_j), v_k \in V\\ 0  & \mbox{ otherwise.}\end{matrix}\right.$$
\end{Definition}

The edge-adjacency matrix is a $|E| \times |E|$ matrix whose entry $(i,j)$ is $-1$ if edge $e_i$ and edge $e_j$ originate from the same vertex , $1$ if edge $e_i$ ends at the vertex where edge $e_j$ starts and $0$ otherwise. Notice that $A_{e,ij}$ is  zero if edge $e_i$ starts where edge $e_j$ ends and that the diagonal entries are  $-1$:

\begin{Definition}[Edge-adjacency matrix] 
Given a directed graph $G=(V,E)$, its edge-adjacency matrix $A_e \in \R^{m \times n} $ is defined by 
$$A_{e,ij} = \left\lbrace \begin{matrix} -1 &\mbox{ if } e_i=(v_s,v_t), e_j=(v_s,v_{t'},   v_s,v_t,v_{t'} \in V \\ 1 &\mbox{ if } e_i=(v_s,v_t), e_j=(v_t,v_{s'}), v_s,v_{s'},v_t \in V\\ 0  & \mbox{ otherwise.}\end{matrix}\right.$$
\label{def:ae}\end{Definition}

We will often encounter the matrix $A_m \otimes I$  where $\otimes$ is the Kronecker product and $I$ the two-by-two identity matrix.  In order to keep the notation simple, we  write $A_m^{(2)}$  for this Kronecker product.

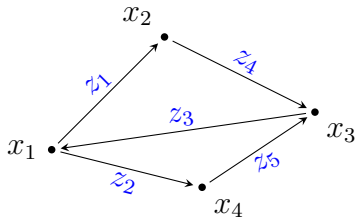
\begin{figure}
\begin{center}
\begin{tikzpicture} 

\node [fill=black,circle, inner sep=1pt,label=180:$x_1$] (1) at ( 0, 0) {};
\node [fill=black,circle, inner sep=1pt,label=135:$x_2$] (2) at (1.5 ,1.5) {};
  \node [fill=black,circle, inner sep=1pt,label=-45:$x_3$] (3) at (3.5 , .5) {};
\node [fill=black,circle, inner sep=1pt,label=-45:$x_4$] (4) at ( 2,-.5) {};

\draw [-stealth,  ] (1) -- (2) node [ midway,above,  sloped, blue] {$z_1$}; ;
\draw [-stealth, ] (1) -- (4) node [ midway,below,  sloped, blue] {$z_2$};;
\draw [-stealth, ] (3) -- (1)  node [ midway,above,  sloped, blue] {$z_3$};;
\draw [-stealth, ] (2) -- (3)  node [ midway,above,  sloped, blue] {$z_4$};;
\draw [-stealth, ] (4) -- (3) node [ midway,below,  sloped, blue] {$z_5$};;
\end{tikzpicture}\caption{The 2-cycles formation.}\label{fig:2cycle1}
\end{center}
\vspace{-1cm}
\end{figure}
\begin{Example}
The mixed-adjacency and edge-adjacency matrices of the 2-cycles of Figure~\ref{fig:2cycle1} are

\begin{equation}\label{eq:defAme2c} 
A_m = \left[\begin{array}{rrrr}
-1 & 1 & 0 & 0\\
0& -1& 1& 0\\
1&0 & -1 & 0\\
0& 0 &1 & -1\\
-1 & 0 &0 &1
\end{array}\right]\mbox{ and } A_e = \left[\begin{array}{rrrrr}
-1 & 1 & 0 & 0 & -1\\
0& -1& 1& 0 & 0\\
1&0 & -1 & 0& 1\\
0& 0 &1 & -1& 0\\
-1 & 0 &0 &1&-1
\end{array}\right].\end{equation}

respectively, where  edge $i$ is labelled by $z_i$ as in Figure~\ref{fig:2cycle1}. \hfill $\square$
\end{Example}

\subsection{Rigidity }
We briefly cover the fundamentals of rigidity and establish the relevant notation. We refer the reader to~\cite{graver93} for a more detailed presentation. 
We call a \emph{framework} an embedding of a graph in $\R^2$ endowed with the usual Euclidean distance, i.e. given $G=(V, E)$, a framework $p$ \emph{attached to a graph} $G$ is a mapping \begin{equation*} p: V \rightarrow \R^2.
\end{equation*}

We  write $x_i$ for $p(v_i)$. We define the  \emph{distance function} $\delta$ of a framework with $n$ vertices as \begin{multline*}
\delta(p): \R^{2n} \rightarrow \R_+^{n(n-1)/2}:(x_1, \ldots, x_n)  \rightarrow \frac{1}{2}\left[ \|x_1-x_2\|^2, \ldots,  \| x_2-x_3\|^2, \ldots,  \|x_{n-1}-x_n\|^2  \right],
\end{multline*} where $\R^+ = [0, \infty) $; i.e. $\delta(p)$ evaluates all the pairwise distances between edges. 
We denote by $\delta(p)|_E$ the restriction of the range of $\delta$ to edges in $E$.

For a graph $G$ with $m$ edges, we define the set of feasible edge-lengths \begin{equation*}\mathcal{L}=\left\lbrace d=(d_1,\ldots,d_m ) \in \R^m_+ \mbox{ for which }  \exists p \mbox{ with } \right.  \left. \delta(p(V))|_E=\sqrt{d} \right\rbrace,\end{equation*} where the square root of $d$ is taken entry-wise. Properties of this set and its relations to the number of ancillary equilibria are discussed in~\cite{belabbasSICOpart1}, but we observe that in the case of the 2-cycles, $\mathcal L$ is of dimension 5. We have taken the square root of $d$ for computational convenience. We denote by $\mathcal{L}_0$ the interior of $\mathcal L$.

The \emph{rigidity matrix}  $R$ of the framework is the Jacobian   $\frac{\partial \delta}{\partial x}$  restricted to the edges in $E$. We denote it by $R=\frac{\partial \delta}{\partial x}|_E$. 
The relevant definitions from rigidity theory are:
\begin{enumerate}
\item \emph{Static rigidity}: A framework is said to by statically rigid, or simply rigid, if for any $d \in \mathcal L$, there are only a finite number of frameworks,   modulo rotation and translation of the plane, such that $\delta|_E(p)=d$.
\item \emph{Infinitesimal rigidity}: A framework is said to be infinitesimally rigid if there are no vanishingly small motions of the vertices, modulo  rotations and translations of the plane, that keep the edge-length constraints satisfied. This translates into~\cite{graver93}:
$$\operatorname{rank} (\frac{\partial \delta}{\partial x}|_{E})=2n-3.$$ 

\item\emph{Minimal rigidity}: A framework is said to be {minimally rigid} if none of the  $m$ frameworks with $m-1$ edges obtained by removing one edge is rigid. 

\end{enumerate}

\subsection{Directed formation control}

We formalize in this section the type of control system considered in this work. We are given a graph $G=(V,E)$ which is assumed to be \emph{minimally rigid} with $|V|=n$ and $|E|=m$. Let $d \in \mathcal{L}$ be a feasible edge-lengths vector. The objective of the formation control problem is to find a decentralized control law, where the information flow is given by $G$,  that will stabilize the system around a framework with  inter-agent distances given by $d$.

In more detail, each agent with position $x_i \in \R^2$ is represented by a vertex $v_i$ in $V$. The dynamics of $x_i$ is allowed to depend only on the relative position of agents $x_k$ for which there is an edge originating from $v_i$ and ending at $v_k$: $$\dot x_i = u_i(x_k, x_l, \ldots), \mbox{ where } (v_i,v_k), (v_i,v_l), \ldots \in E.$$

In the case of directed control, it is easy to see that one cannot in general ask an agent to satisfy more than two edge lengths constraints~\cite{baillieulcdc03}. For a subtler analysis of the situation, we refer to~\cite{hendrickx07}. From now on, we always assume that $G$ has a maximum outvalence of two.

Given orderings of the edges and vertices of $G$, we define $$z_i=x_k-x_l$$ if edge $e_i$ links nodes $v_k$ to $v_l$. We define, with a slight abuse of  notation, $$e_i = z_i^Tz_i - d_i$$ the error in edge length. 

We can thus write the set $\mathcal{E}_d$ for formation control problems as $$\mathcal{E}_d =\lbrace x \in \R^{2n} | e_i(x)=0, \mbox{for all edges in } E \rbrace.$$
An important feature of the formation control problem is that it is defined up to a rigid transformation of the plane: if $x \in \R^{2n}$ is a framework of $G$,  frameworks obtained by a rotation and translation of $x$---we write them as $A\cdot x$, for $A \in SE(2)$, the special Euclidean group~\cite{belabbasSICOpart1}---are equivalent to $x$ from a formation control point of view. 
As a consequence, we can assume without loss of generality that the agents measure only the \emph{relative} positions of other agents. 

In most work on formation control, it is assumed that the agents' control law depends on the desired edge-lengths solely through the error in edge lengths $e_i$:
$$u = u(e_i)= u(z_i^Tz_i - d_i).$$

We consider here a broader class of systems by allowing the $u_i$'s to depend explicitly on the objective distances $d$. We distinguish this dependence of the control on $d$, which is a parameter by opposition to a dynamical variable, by using a semi-colon: $u=u(d_i;z_i^Tz_i-d_i)$.

We have: 
$$\dot x_i = u_i(d_i; e_i) z_i$$ in case agent $i$ follows a single agent and $$ \dot x_i = u_{1i}(d_k,d_l;e_k,e_l, z_k^Tz_l)z_k+ u_{2i}(d_k,d_l;e_k,e_l, z_k^Tz_l)z_l$$ in case agent $i$ follows two agents. For a complete justification of this model, see~\cite{belabbasSICOpart2}.

We established in~\cite{belabbasSICOpart1} a few conditions a feedback control law had to satisfy in order to yield a well-defined formation control system. We use them here to define a class of  feedback control law for formation control:

\begin{Definition} A feedback control law $u_i$ is \emph{compatible} with a formation control problem if
\begin{enumerate}
\item $u_i(d_j; e_j)$ is such that $u_i(d_j; 0)=0$ if agent $i$ has one co-leader.
\item $u_i(d_j,d_k; e_j,e_k,z_j \cdot z_k)$ is such that $u_i(d_j,d_k;0,0,z)=0$ for all $z$ if agent $i$ has two co-leaders.
\end{enumerate}
We accordingly define the class of controls $\mathcal{U}$ to be all twice differentiable control laws such that $u_i(d_i;e_i) = 0$ and $u_j(d_i,d_j;e_i,e_j,\cdot)=0$ have an isolated zero for $e_i=e_j=0$, depending on whether the agent has one or two co-leaders. ~\label{def:defcont}
\end{Definition}
From the above discussion, we conclude that:
\begin{Proposition} Let $\mathcal{E}_d$ be the set of design equilibria for a formation control problem with underlying information flow graph $G.$ If the  graph $G$ is \emph{rigid}, the set $\mathcal{E}_d/SE(2)$ is finite. In other words, the set of design equilibria is finite up to rigid transformations.
\end{Proposition}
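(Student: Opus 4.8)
The plan is to show that the design equilibria, considered modulo the action of $SE(2)$, correspond exactly to the frameworks realizing the prescribed edge lengths $d$, and then to invoke static rigidity of $G$ to conclude finiteness. First I would unwind the definition: a point $x \in \R^{2n}$ lies in $\mathcal{E}_d$ precisely when $e_i(x) = z_i^T z_i - d_i = 0$ for every edge $e_i \in E$, that is, when $\|x_k - x_l\|^2 = d_i$ for each edge linking $v_k$ to $v_l$. In the notation of the rigidity subsection this says exactly that $\delta(p)|_E = \sqrt{d}$ where $p$ is the framework with $p(v_i) = x_i$. Hence $\mathcal{E}_d$ is, verbatim, the set of frameworks of $G$ realizing the feasible edge-length vector $d$.

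Next I would pass to the quotient. Because the error functions $e_i$ depend only on the relative positions $z_i = x_k - x_l$, they are invariant under the diagonal action of $SE(2)$ on $\R^{2n}$: a rotation and translation $A \cdot x$ preserves all pairwise Euclidean distances and therefore preserves each $e_i$. Consequently $\mathcal{E}_d$ is a union of $SE(2)$-orbits and the quotient $\mathcal{E}_d / SE(2)$ is well defined. Now I invoke the definition of static rigidity stated in the excerpt: a framework is rigid if, for any $d \in \mathcal{L}$, there are only finitely many frameworks satisfying $\delta|_E(p) = d$ \emph{modulo rotation and translation of the plane}. Since $G$ is assumed rigid and $d \in \mathcal{L}$ is feasible, this is exactly the statement that the set of realizations of $d$, taken modulo $SE(2)$, is finite. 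Identifying that set with $\mathcal{E}_d / SE(2)$ via the previous paragraph gives the conclusion.

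The bulk of the argument is therefore definitional bookkeeping rather than hard analysis, and the genuine content is imported wholesale from the definition of static rigidity. The one point requiring a little care — and the step I expect to be the main obstacle — is the translation between the squared-distance formulation in the definition of $e_i$ (where $d_i$ is a squared length, matching $z_i^T z_i$) and the distance function $\delta$, which the paper defines with a factor $\frac{1}{2}$ and with $\mathcal{L}$ defined through $\delta(p)|_E = \sqrt{d}$. I would reconcile these conventions explicitly, confirming that the solution set of $\{e_i(x) = 0 : e_i \in E\}$ coincides with the realization set appearing in the rigidity definition up to this harmless rescaling of the edge-length data, so that feasibility of $d \in \mathcal{L}$ indeed guarantees the realization set is nonempty and rigidity guarantees it is finite modulo $SE(2)$.
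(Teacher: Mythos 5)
Your argument is correct and is essentially the paper's own: the paper states this proposition as an immediate consequence of the preceding discussion, namely that $\mathcal{E}_d$ is by definition the set of frameworks realizing the feasible edge-length vector $d$, and that static rigidity of $G$ means precisely that this realization set is finite modulo $SE(2)$. Your extra care about the squared-distance versus $\sqrt{d}$ conventions is a reasonable bookkeeping point but does not change the substance.
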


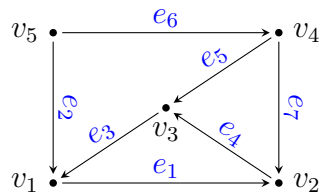
\begin{figure}[ht]
\begin{center}
\begin{tikzpicture} 

\node [fill=black,circle, inner sep=1pt,label=180:$v_1$] (1) at ( 0, 0) {};
\node [fill=black,circle, inner sep=1pt,label=0:$v_2$] (2) at (3 ,0) {};
 \node [fill=black,circle, inner sep=1pt,label=-90:$v_3$] (3) at (1.5 , 1) {};
\node [fill=black,circle, inner sep=1pt,label=0:$v_4$] (4) at ( 3,2) {};
\node [fill=black,circle, inner sep=1pt,label=180:$v_5$] (5) at ( 0,2) {};

\draw [-stealth,  ] (1) -- (2) node [ midway,above,  sloped, blue] {$e_1$}; 
\draw [-stealth, ] (5) -- (1) node [ midway,above,  sloped, blue] {$e_2$};
\draw [-stealth, ] (3) -- (1)  node [ midway,above,  sloped, blue] {$e_3$};
\draw [-stealth, ] (2) -- (3)  node [ midway,above,  sloped, blue] {$e_4$};
\draw [-stealth, ] (4) -- (3) node [ midway,above,  sloped, blue] {$e_5$};
\draw [-stealth, ] (5) -- (4) node [ midway,above,  sloped, blue] {$e_6$};
\draw [-stealth, ] (4) -- (2) node [ midway,above,  sloped, blue] {$e_7$};

\end{tikzpicture}\caption{The subgraphs with vertices $\lbrace v_1, v_2, v_3 \rbrace$ and edges $\lbrace e_1, e_3, e_4 \rbrace$ or vertices $\lbrace v_1, v_2, v_3, v_4\rbrace$ and edges $\lbrace e_1, e_3, e_4, e_5, e_7 \rbrace$  define  subformations. The subgraph with vertices $V' = \lbrace v_1, v_4, v_5 \rbrace$ does not, for any set of edges, since outgoing edges point to vertices not in $V'$.}\label{fig:subformation}
\end{center}
\end{figure}
We end this section with the definition of \emph{subformation}, illustrated in Figure~\ref{fig:subformation}: given a graph $G=(V,E)$ underlying a formation, a graph $G'=(V',E')$ underlies a subformation if $G'$ is a subgraph of $G$ where all outgoing edges from vertices in $V'$ are included in $E'$.

\subsection{The 2-cycles formation}

\begin{figure}[]
\begin{center}
\subfloat[]{
\begin{tikzpicture}[scale=.7] 
\node [fill=black,circle, inner sep=1pt,label=135:$x_1$] (1) at ( 0, 0) {};
\node [fill=black,circle, inner sep=1pt,label=-135:$x_2$] (2) at (-1 ,0) {};
\node [fill=black,circle, inner sep=1pt,label=-45:$x_3$] (3) at (0 ,-1) {};
\node [fill=black,circle, inner sep=1pt,label=-45:$x_4$] (4) at (1 ,1) {};

\draw [-stealth ] (1) -- (2);
\draw [-stealth] (3) -- (1);
\draw [-stealth] (4) -- (3);
\draw [-stealth] (2) -- (3);
\draw [-stealth] (1) -- (4);

\end{tikzpicture}

} 
\subfloat[]{
\begin{tikzpicture}[scale=.7] 
\node [fill=black,circle, inner sep=1pt,label=90:$x_1$] (1) at ( 0, 0) {};
\node [fill=black,circle, inner sep=1pt,label=0:$x_2$] (2) at (1 ,0) {};
\node [fill=black,circle, inner sep=1pt,label=-45:$x_3$] (3) at (0 ,-1) {};
\node [fill=black,circle, inner sep=1pt,label=-45:$x_4$] (4) at (1 ,1) {};

\draw [-stealth ] (1) -- (2);
\draw [-stealth] (3) -- (1);
\draw [-stealth] (4) -- (3);
\draw [-stealth] (2) -- (3);
\draw [-stealth] (1) -- (4);

\end{tikzpicture}
} 
\subfloat[]{
\begin{tikzpicture}[scale=.7] 
\node [fill=black,circle, inner sep=1pt,label=45:$x_1$] (1) at ( 0, 0) {};
\node [fill=black,circle, inner sep=1pt,label=0:$x_2$] (2) at (1 ,0) {};
\node [fill=black,circle, inner sep=1pt,label=-135:$x_3$] (3) at (0 ,-1) {};
\node [fill=black,circle, inner sep=1pt,label=180-45:$x_4$] (4) at (-1 ,1) {};

\draw [-stealth ] (1) -- (2);
\draw [-stealth] (3) -- (1);
\draw [-stealth] (4) -- (3);
\draw [-stealth] (2) -- (3);
\draw [-stealth] (1) -- (4);

\end{tikzpicture}
} 
\subfloat[]{
\begin{tikzpicture}[scale=.7] 
\node [fill=black,circle, inner sep=1pt,label=90:$x_1$] (1) at ( 0, 0) {};
\node [fill=black,circle, inner sep=1pt,label=135:$x_2$] (2) at (-1 ,0) {};
\node [fill=black,circle, inner sep=1pt,label=-45:$x_3$] (3) at (0 ,-1) {};
\node [fill=black,circle, inner sep=1pt,label=135:$x_4$] (4) at (-1 ,1) {};

\draw [-stealth ] (1) -- (2);
\draw [-stealth] (3) -- (1);
\draw [-stealth] (4) -- (3);
\draw [-stealth] (2) -- (3);
\draw [-stealth] (1) -- (4);
\end{tikzpicture}
}\end{center}
\caption{\small Four formations in the plane that are not equivalent under rotations and translation and that have the same corresponding edge lengths. $(a)$ is the mirror-symmetric of $(c)$ and $(b)$ is the mirror-symmetric of $(d)$.}
\label{fig:4formations}
\vspace{-.6cm}
\end{figure}
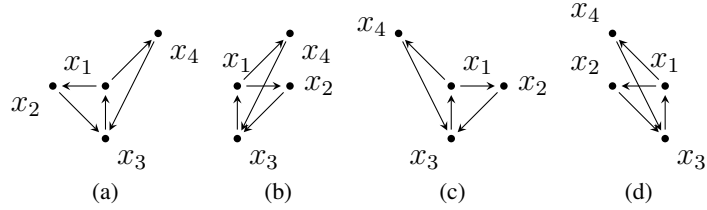
The 2-cycles is the formation represented in Figure~\ref{fig:s2c}. Let $x_i \in \R^2$, $i=1\ldots 4$ represent the position of $4$ agents in the plane. We define the vectors 
\begin{equation}
\label{eq:defz}
z_1 =x_2-x_1;\ 
z_2 = x_3-x_2;\  
z_3 = x_1-x_3; \ 
z_4 =  x_3-x_4;\ 
z_5 = x_4-x_1
\end{equation}

Hence a general control law for such a system is
\begin{equation}\label{eq:dyns}
\left\lbrace\begin{array}{rcl}
\dot x_1 &=& u_{11}(d_1,d_5;e_1,e_5,z_1^Tz_5) z_1+ u_{12}(d_1,d_5;e_1,e_5,z_1^Tz_5) z_5 \\
\dot x_2 &=& u_2(d_2;e_2) z_2 \\
\dot x_3 &=& u_3(d_3;e_3) z_3 \\
\dot x_4 &=& u_4(d_4;e_4) z_4
\end{array}\right.
\end{equation}

The set of design equilibria $\mathcal{E}_d$ for the 2-cycles is of cardinality $4$, up to rigid transformations, since there are four frameworks in the plane for which $e_i=0$; they are depicted in Figure~\ref{fig:4formations}.

In general,  the set $\mathcal{E}_a$ of ancillary equilibria depends  on the choice of feedbacks $u_i \in \mathcal U$. Due to the invariance and decentralized nature of the system, some configurations belong to $\mathcal{E}_a$ for all elements of $\mathcal U$:

\begin{Proposition} The set $\mathcal{E}$ contains, in addition to the equilibria in $\mathcal{E}_d$,  the  frameworks characterized by
\begin{enumerate}
\item $z_i=0$ for all $i$,  which corresponds to having all the agents superposed.
\item all $z_i$ are aligned, which corresponds to having all agents on the same one-dimensional subspace in $\R^2$. These frameworks form a three dimensional invariant subspace of the dynamics.
\item  $e_2=e_3=e_4=0$, $z_1$ and $z_5$ are aligned and so that $$u_1(e_1,e_5, z_1^T z_5) \|z_1\|=\pm u_5(e_1,e_5,z_1^T z_5) \|z_5\|,$$ where the sign depends on whether $z_1$ and $z_5$ point in the same or opposite directions. 
\end{enumerate}
\end{Proposition}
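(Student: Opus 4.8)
The plan is to prove the three claims by \emph{direct verification} that the stated configurations annihilate the right-hand side of the decentralized dynamics~\eqref{eq:dyns} (for items~1 and~3) or that this right-hand side is tangent to the stated set (for item~2), using only the structural form of the control law and the compatibility conditions of Definition~\ref{def:defcont}. The key observation throughout is that every $\dot x_i$ is a linear combination of the difference vectors $z_j$ defined in~\eqref{eq:defz}, so that collapsing or aligning these vectors forces the velocities to collapse or align as well, independently of the particular choice of $u \in \mathcal{U}$.

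For item~1, I would set $x_1 = x_2 = x_3 = x_4$, so that $z_i = 0$ for every $i$. Since each $\dot x_i$ in~\eqref{eq:dyns} is a combination of the $z_j$ whose coefficients $u_{\cdot}$ remain finite (the controls in $\mathcal U$ are twice differentiable, hence defined at $e_i = -d_i$), every $\dot x_i$ vanishes because it is a finite multiple of $0$; this holds for all $u$, so the superposed framework lies in $\mathcal{E}$. For item~2, I would observe that if all agents are collinear then each $z_i = \alpha_i v$ for a common direction $v$ and scalars $\alpha_i$; substituting into~\eqref{eq:dyns} gives $\dot x_i \in \mathrm{span}(v)$ for each $i$, i.e.\ every velocity points along the line. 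Hence the vector field is tangent to the set of collinear frameworks, which is therefore invariant. To pin down its dimension I would parametrise a collinear framework by a line in the plane ($2$ parameters) together with the $n=4$ scalar positions along it ($4$ parameters), and quotient by the free action of $SE(2)$ (dimension $3$) on configurations with distinct vertices, giving $2+4-3 = 3$; any equilibrium of the flow restricted to this $3$-dimensional invariant subspace then belongs to $\mathcal{E}$.

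For item~3, writing $u_1 \equiv u_{11}$ and $u_5 \equiv u_{12}$ for the two coefficients in $\dot x_1$, I would first use the compatibility conditions: agents $2,3,4$ each have a single co-leader, so $e_2=e_3=e_4=0$ forces $u_2(d_2;0)=u_3(d_3;0)=u_4(d_4;0)=0$ and hence $\dot x_2=\dot x_3=\dot x_4=0$. It then remains to kill $\dot x_1 = u_1 z_1 + u_5 z_5$. When $z_1$ and $z_5$ are aligned I would write $z_1 = \|z_1\|\,\hat v$ and $z_5 = \pm\|z_5\|\,\hat v$, so that $\dot x_1 = \bigl(u_1\|z_1\| \pm u_5\|z_5\|\bigr)\hat v$; this vanishes exactly when $u_1\|z_1\| = \mp u_5 \|z_5\|$, which is the stated scalar condition with the sign dictated by whether $z_1,z_5$ point the same or opposite way. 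Because this condition does not require $u_1$ or $u_5$ to vanish, the resulting equilibria genuinely differ from those in $\mathcal{E}_d$, and a short intermediate-value argument on the scalar equation shows such configurations persist for every $u \in \mathcal{U}$.

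I expect the routine equilibrium verifications to be immediate; the parts needing real care are (i) the dimension count in item~2, where one must check that $SE(2)$ acts freely on collinear frameworks with distinct vertices so that the quotient is a genuine $3$-manifold, and (ii) the existence assertion in item~3 --- showing that the balance $u_1\|z_1\| = \pm u_5\|z_5\|$ is actually solvable over the relevant range of configurations for \emph{every} admissible control, which is a continuity argument rather than a formal substitution. A minor but necessary technical point is to confirm that all control values appearing in~\eqref{eq:dyns} stay finite at the degenerate configurations where some $z_i = 0$, which is guaranteed by the twice-differentiability built into $\mathcal{U}$.
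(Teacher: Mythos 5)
Your proposal is correct and takes essentially the same route as the paper, whose entire proof is the one line ``This result is straightforward from an inspection of Equation~\eqref{eq:dyns}''; your direct substitution into the dynamics, the use of the compatibility conditions $u_i(d_i;0)=0$ to kill agents $2,3,4$ in item~3, and the $6-3=3$ dimension count for the collinear set are exactly the inspection the author had in mind, spelled out. The only remark worth making is that your reading of item~2 (collinear frameworks form an \emph{invariant} set whose restricted equilibria lie in $\mathcal{E}$, rather than every collinear framework being an equilibrium) is the correct and charitable interpretation of the proposition's slightly loose wording, and your intermediate-value existence argument in item~3 is extra care beyond what the statement strictly requires.
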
\label{prop:allequ}

This result is straightforward from an inspection of Equation~\eqref{eq:dyns}. Frameworks of type 2 above are not infinitesimally rigid and  they define an invariant submanifold of the dynamics. Frameworks of type 3 appear whenever a vertex of the information flow graph has an outvalence of two.

\subsection{Singular formations for $n=4$ agents.}

Let us gather the $m$ vectors $z_i \in \R^2$ in $z=(z_1,z_2, \ldots, z_m) \in \R^{2m}$. We set $$D(z) = \left[\begin{matrix}
z_1^T & 0 & 0& \ldots & 0\\
0 &z_2^T& 0 & \ldots & 0 \\
0 & \ldots & & \ddots &\vdots \\
0 & 0 &\ldots & 0 &z_m^T
\end{matrix}\right].$$ Hence $D(z) \in \R^{m\times 2m}$.

In order to use Sotomayor's theorem, we single out a particular type of frameworks which, even though they are infinitesimally rigid, show a certain degree of degeneracy.  We first observe that,  in general, the angle between $z_1$ and $z_5$ is not uniquely determined  by the edge lengths. We define $\mathcal{S}$ to be set of edge lengths such that \emph{at least} one of the four frameworks corresponding to $d$ has $z_1$ parallel to $z_5$ with the notation of Figure~\ref{fig:illS}: $$\mathcal{S}= \lbrace d \in \mathcal{L} \mbox{ s.t. } z_1 \mbox{ parallel } z_5 \mbox{ for one framework at least.} \rbrace$$ and $\mathcal{S}_0 = \mathcal{S} \cap \mathcal{L}_0$.

\begin{figure}[]
\begin{center}
\subfloat{
\begin{tikzpicture}[scale=.7] 

\node [fill=black,circle, inner sep=1pt,label=180:$x_1$] (1) at ( 0, 0) {};
\node [fill=black,circle, inner sep=1pt,label=135:$x_2$] (2) at (-1.5 ,-1.5) {};
  \node [fill=black,circle, inner sep=1pt,label=-45:$x_3$] (3) at (0 , -2.5) {};
\node [fill=black,circle, inner sep=1pt,label=-45:$x_4$] (4) at ( 1,-1.5) {};

\draw [-stealth, , ] (1) -- (2) node [ midway,above,  sloped, blue] {$z_1$};
\draw [-stealth, ] (1) -- (4) node [ midway,above,  sloped, blue] {$z_5$};
\draw [-stealth, ] (2) -- (3) node [ midway,above,  sloped, blue] {$z_2$};
\draw [-stealth, ] (4) -- (3) node [ midway,above,  sloped, blue] {$z_4$};
\draw [-stealth, ] (3) -- (1) node [ midway,above,  sloped, blue] {$z_3$};
\end{tikzpicture}}\qquad
\subfloat{
\begin{tikzpicture}[scale=.8]
\node [fill=black,circle, inner sep=1pt,label=180:$x_1$] (1) at ( 0, 0) {};
\node [fill=black,circle, inner sep=1pt,label=135:$x_2$] (2) at (-1.5 ,.75) {};
  \node [fill=black,circle, inner sep=1pt,label=-45:$x_3$] (3) at (0 , -2.5) {};
\node [fill=black,circle, inner sep=1pt,label=-45:$x_4$] (4) at ( .75,-.5*3/4) {};

\draw [-stealth, , ] (1) -- (2) node [ midway,above,  sloped, blue] {$z_1$};
\draw [-stealth, ] (1) -- (4) node [ midway,above,  sloped, blue] {$z_5$};
\draw [-stealth, ] (2) -- (3) node [ midway,above,  sloped, blue] {$z_2$};
\draw [-stealth, ] (4) -- (3) node [ midway,above,  sloped, blue] {$z_4$};
\draw [-stealth, ] (3) -- (1) node [ midway,above,  sloped, blue] {$z_3$};
\end{tikzpicture}
}
\end{center}
\caption{\small The formation in (a) is such that $(\|z_1\|,\ldots,\|z_5\|) \notin \mathcal{S}$, whereas $(\|z_1\|,\ldots,\|z_5\|) \in \mathcal{S}$ for the formation depicted in (b) }
\label{fig:illS}
\vspace{-.5cm}
\end{figure}
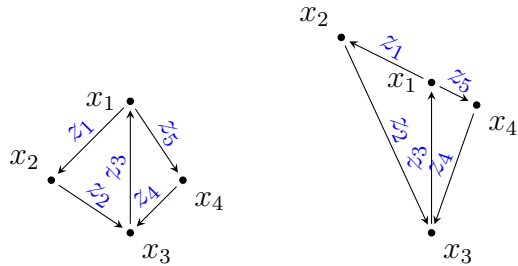

 We will need the following properties of $\mathcal S$:

\begin{Lemma}\label{lem:propS}
The following properties of $\mathcal{S}$ hold:
\begin{enumerate} 
\item $\mathcal{S}$ is of codimension one is $\mathcal{L}$
\item The formations corresponding to edge lengths in $\mathcal{S}_0$ are infinitesimally rigid.
\end{enumerate}
\end{Lemma}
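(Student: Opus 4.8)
The plan is to treat the two claims separately, establishing the infinitesimal rigidity statement (part 2) first, since it clarifies the geometry that part 1 then exploits.

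For part 2, I would exploit the Henneberg structure of the 2-cycles graph. The edges $z_1,z_2,z_3$ form the triangle on $\lbrace x_1,x_2,x_3\rbrace$, and $x_4$ is attached to $x_1$ and $x_3$ by the two edges $z_5,z_4$; this is exactly a type-I vertex addition. For such a construction the framework is infinitesimally rigid precisely when the triangle $x_1x_2x_3$ is nondegenerate and the attachment vertices $x_1,x_3,x_4$ are not collinear. Both facts follow from membership in $\mathcal{L}_0$: feasibility of $d$ amounts to the triangle inequalities for the triples $(\sqrt{d_1},\sqrt{d_2},\sqrt{d_3})$ and $(\sqrt{d_3},\sqrt{d_4},\sqrt{d_5})$, and lying in the interior $\mathcal{L}_0$ forces all of these inequalities to be strict, so both triangles $123$ and $134$ are nondegenerate. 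Since the parallelism condition defining $\mathcal{S}$ concerns the collinearity of $x_1,x_2,x_4$ --- not the rigidity-critical triples --- it does not interfere, and every framework with $d\in\mathcal{S}_0\subset\mathcal{L}_0$ is infinitesimally rigid, i.e. $\operatorname{rank}(\frac{\partial\delta}{\partial x}|_E)=5$.

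For part 1, I would fix the gauge by placing $x_1=0$ and $x_2=(\sqrt{d_1},0)$ and solve the edge equations explicitly. Writing $x_3=(a,b)$ and $x_4=(p,q)$, the constraints give $a=(d_1-d_2+d_3)/(2\sqrt{d_1})$, $p^2+q^2=d_5$, and $2ap+2bq=d_3-d_4+d_5$. Since $z_1=(\sqrt{d_1},0)$, the condition $z_1\parallel z_5$ is simply $q=0$, which forces $p=\pm\sqrt{d_5}$ and $2ap=d_3-d_4+d_5$. Eliminating the sign by squaring yields the single polynomial relation $H(d):=(d_1-d_2+d_3)^2 d_5-(d_3-d_4+d_5)^2 d_1=0$, which one checks is independent of $b$; thus both circle-intersection branches and both triangle reflections are accounted for, and $\mathcal{S}\subseteq\lbrace H=0\rbrace\cap\mathcal{L}$. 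To conclude that $\mathcal{S}$ has codimension one I would verify that $0$ is a regular value of $H$ on a dense subset: the partial derivative $\partial H/\partial d_4=2 d_1(d_3-d_4+d_5)$ is nonzero on $\mathcal{L}_0$ except where $d_3-d_4+d_5=0$, which (because $2ap=d_3-d_4+d_5$ and $p\neq 0$ on $\mathcal{L}_0$) forces $a=0$, hence $d_1-d_2+d_3=0$ as well --- a further, strictly lower-dimensional locus. So $\lbrace H=0\rbrace$ is a smooth hypersurface away from a set of codimension at least two, giving $\operatorname{codim}\mathcal{S}=1$.

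The main obstacle I anticipate is in part 1: making the codimension statement airtight rather than merely \emph{generic}. The delicate points are that $\mathcal{S}$ is a union over the several frameworks associated to each $d$ --- handled cleanly by the squaring that merges the two intersection branches and by the $b$-independence that merges the two triangle reflections --- and that one must rule out $\mathcal{S}$ being lower-dimensional or empty, which requires exhibiting a regular point of $H$ inside $\mathcal{L}_0$ and checking that the singular locus $\lbrace d_3-d_4+d_5=0\rbrace\cap\lbrace H=0\rbrace$ is genuinely thinner. Part 2 is comparatively routine once the Henneberg reduction and the observation that interior edge-lengths force strict triangle inequalities are in place.
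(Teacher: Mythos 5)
Your proof is correct, but it takes a genuinely different route from the paper on both parts. For part 2, the paper factors the rigidity matrix as $R=D(z)A_m^{(2)}$, computes the cokernel of the mixed adjacency matrix of the 2-cycles, and argues by a rank/projection inspection that $R$ has full row rank whenever no $z_i$ vanishes; your Henneberg (0-extension) argument --- triangle $123$ nondegenerate plus $x_1,x_3,x_4$ noncollinear, both forced by the strict triangle inequalities characterizing $\mathcal{L}_0$, and neither touched by the collinearity of $x_1,x_2,x_4$ that defines $\mathcal{S}$ --- is a cleaner and more standard combinatorial-rigidity route, though it forgoes the factorization $R=D(z)A_m^{(2)}$ that the paper reuses later for the Jacobian computations. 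For part 1, the paper simply parametrizes $\mathcal{S}$ by the three edge lengths of the triangle plus one signed length of $z_5$ along $z_1$ and counts four parameters against $\dim\mathcal{L}=5$; your approach instead exhibits an explicit defining polynomial $H(d)=(d_1-d_2+d_3)^2 d_5-(d_3-d_4+d_5)^2 d_1$ and checks that $0$ is a regular value away from a codimension-two locus. This is more work but also more honest: the paper's parameter count only bounds the dimension from above and silently assumes the parametrization is an immersion, whereas your regular-value argument pins the codimension down exactly once you (a) note that the construction is reversible, so $\{H=0\}\cap\mathcal{L}_0\subseteq\mathcal{S}$ and not merely the reverse inclusion, and (b) exhibit one regular point of $H$ in $\mathcal{L}_0$ --- both of which you correctly flag and both of which are routine (any configuration with $x_1,x_2,x_4$ collinear and $x_3$ generic will do). Neither of these loose ends is a gap relative to the paper, whose own argument is looser still.
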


\begin{proof}
For the first part, observe that we can parametrize $\mathcal{S}$ by first choosing a feasible $d_1, d_2, d_3$ yielding a triangle $x_1, x_2, x_3$ and one additional parameter giving the signed length of $z_5 $, with the sign referring to $z_5$ going in the same direction as  $z_1$ or the opposite direction. Because the set of feasible $d_1, d_2, d_3$ is included in $\mathcal{L}$, we see that we need $4$ parameters to describe a formation in $\mathcal{S}$ and hence it is of codimension one.

For the second part, we have that the rigidity matrix of the 2-cycles is given by $R$:
{\small}
\begin{equation}
\label{eq:rigtc}
R= \left[ \begin{matrix}
z_1^T & -z_1^T & 0 & 0 \\
0 & z_2^T &-z_2^T &0 \\
-z_3^T & 0 & z_3^T & 0 \\
0 & 0 & -z_4^T & z_4^T\\
z_5^T & 0 & 0 & -z_5^T
\end{matrix} \right].
\end{equation}

Some simple algebra shows that one has \begin{equation}
\label{eq:RZ}
R = D(z)A^{(2)}_m
\end{equation}
where we recall that $A_m$ is the mixed adjacency matrix. 
In the case of the 2-cycles, the mixed adjacency matrix $A_m \in \R^{5 \times 4}$ is of rank $3$. The cokernel\footnote{The cokernel of a linear map $f:A \rightarrow B$ is the quotient space $B/\operatorname{im}(f)$. Its coimage is $A / \operatorname{ker}(f)$.} of $A_m$ is spanned by $[0, 0, 1, 1 ,1 ]^T$ and $[1,1,1,0,0]^T$. Hence, the cokernel of $A_m^{(2)}$ is four dimensional and spanned by the vectors $[0, 0, 1, 1 ,1 ]^T \otimes [1, 0]^T$, $[0, 0, 1, 1 ,1 ]^T \otimes [0, 1]^T$, $[1, 1, 1, 0 ,0 ]^T \otimes [1, 0]^T$ and $[1, 1, 1, 0 ,0 ]^T \otimes [0, 1]^T$

The matrix $D(z)$ is of full rank unless  $z_i=0$ for some $i$, which corresponds to two agents superposed.  We thus have that $D(z)$ is of full rank for formations in $\mathcal{S}_0$. Because $D(z)$ is of full row rank,  $R$ is of full (row) rank if $A_m^{(2)}$ projects \emph{onto} the coimage of $D(z)$, which is easy to see by inspection. 
\end{proof}

\section{The 2-cycles is not almost surely stabilizable}\label{sec:form}
\subsection{Statement of the main result}
The main result of this paper is to show that given any robust control law, there is an open set of target configurations that are not almost surely stabilizable for the 2-cycles; equivalently, to show that for any control law that robustly stabilizes a design equilibrium, there is an open set of $d$'s in $\mathcal{L}$ for which  an \emph{ancillary} equilibrium is stable.

\begin{Theorem}~\label{th:notstab} The 2-cycles formation is not robustly almost-sure stabilizable  for an open set of design frameworks.
\end{Theorem}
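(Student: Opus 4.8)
The plan is to recognize the singular locus $\mathcal{S}$ of Lemma~\ref{lem:propS} as the bifurcation set of the formation and to show, via Sotomayor's Theorem~\ref{th:soto}, that crossing $\mathcal{S}$ forces a design equilibrium and a common ancillary equilibrium to exchange stability through a transcritical bifurcation. Since a robust control law keeps the design equilibrium stable on one side of $\mathcal{S}$, the exchange of stability deposits a \emph{stable} ancillary equilibrium on the other side, and hyperbolicity makes this persist on an open set of $d$'s --- exactly the negation of almost-sure stability.

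First I would fix an arbitrary $u \in \mathcal{U}$ that robustly stabilizes a design equilibrium and select a design framework whose edge lengths $d^*$ lie on $\mathcal{S}_0$, so that $z_1 \parallel z_5$ there. The decisive structural observation is that on $\mathcal{S}$ the design equilibrium is \emph{always} singular, for \emph{every} $u$: agent $1$ regulates both errors $e_1$ and $e_5$ only through motions along $z_1$ and $z_5$, and when these two directions become parallel its control authority drops rank, producing a zero eigenvalue in $\partial f / \partial x$. This is precisely the point where the design equilibrium ($e_1=e_5=0$) merges with a common ancillary equilibrium of the type with $e_2=e_3=e_4=0$, $z_1 \parallel z_5$ and the balance $u_1\|z_1\|=\pm u_5\|z_5\|$. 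Because $\mathcal{S}$ has codimension one (Lemma~\ref{lem:propS}), I can choose a smooth one-parameter family $d(\mu)$ meeting $\mathcal{S}$ transversally at $\mu=0$, reducing the situation to a scalar-parameter family $\dot x = f_\mu(x)$ on which Theorem~\ref{th:soto} can act.

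Next I would verify the three hypotheses of Sotomayor's theorem at the coalescence point $(x_0,\mu_0)$. For hypothesis (1), the rank drop above contributes exactly one zero eigenvalue; robust stability of the design branch for $\mu<0$ together with infinitesimal rigidity on $\mathcal{S}_0$ (Lemma~\ref{lem:propS}) guarantees, by continuity of the spectrum, that the remaining eigenvalues stay strictly negative at $\mu_0$, so the zero eigenvalue is simple with well-defined left and right eigenvectors $w,v$. Hypotheses (2) and (3) are the nondegeneracy conditions, computed from $w^{T}\partial_\mu f\, v$, $w^{T}\partial^2_{xx}f\,(v,v)$ and $w^{T}\partial^2_{x\mu}f\,v$; I would evaluate these using the factorization $R = D(z)A_m^{(2)}$ and the block structure of~\eqref{eq:dyns}, and invoke the final genericity clause of Theorem~\ref{th:soto} to conclude that, after an arbitrarily small perturbation that keeps $u$ in the robust class, they hold. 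The theorem then delivers a transcritical bifurcation: two arcs of equilibria, the design arc and the ancillary arc, that swap stability across $\mu_0$. With the transverse eigenvalues negative, exactly one of the two arcs is fully stable on each side, and since the design arc is stable for $\mu<0$ the ancillary arc is stable for $\mu>0$. Stability being an open condition, a type-$3$ ancillary equilibrium is stable for an open set of $d\in\mathcal{L}$; there $\mathcal{E}_s\not\subset\mathcal{E}_d$, so the system is not almost-surely stable, and as $u$ was an arbitrary robust control law the claim follows.

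I expect the crux to be hypothesis (1) of Theorem~\ref{th:soto} together with the transversality conditions (2)--(3). Isolating a single zero eigenvalue and certifying that all the transverse eigenvalues remain negative at $\mathcal{S}$ requires a careful, structured computation of the non-symmetric Jacobian coming from the directed dynamics, and checking the Hessian nondegeneracy demands control of the second-order behaviour of an essentially arbitrary $u\in\mathcal{U}$. The subtle point is to reconcile ``arbitrary robust $u$'' with the generic nondegeneracy clause --- that is, to show that the control laws for which the bifurcation fails to be transcritical form a nowhere-dense set inside the robust class --- and to ensure that the robust stability region genuinely abuts $\mathcal{S}$ so that the limiting transverse spectrum is negative.
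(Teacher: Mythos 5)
Your overall route is the same as the paper's: identify $\mathcal{S}_0$ (where $z_1\parallel z_5$) as the locus where the Jacobian drops rank by one for every compatible $u$, reduce to a one-parameter family crossing $\mathcal{S}$ transversally, apply Sotomayor's Theorem~\ref{th:soto} to get a transcritical bifurcation in which the design arc and the type-3 ancillary arc exchange stability, and conclude that a stable ancillary equilibrium exists on an open set of $d$'s, with genericity (residual complement) ruling out any robust $u$. This is exactly the content of the paper's Theorem~\ref{th:tech1} and its deduction of Theorem~\ref{th:notstab}.

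There is, however, one genuine gap in how you propose to discharge Sotomayor's hypotheses. You group condition (2), namely $w^{T}\frac{\partial f_\mu}{\partial\mu}\big|_{x_0,\mu_0}v=0$, together with the condition-(3) inequalities as ``nondegeneracy conditions'' to be obtained ``after an arbitrarily small perturbation.'' That cannot work: condition (2) is an exact \emph{equality}, and for a generic one-parameter family crossing a simple zero eigenvalue one has $w^{T}\partial_\mu f\,v\neq 0$, which yields a saddle-node, not a transcritical bifurcation. Perturbing $u$ pushes you \emph{away} from condition (2), not toward it. The paper establishes (2) structurally, not generically: Lemma~\ref{lem:dfxdfd} shows that at $\mathcal{S}_0$ one has $\frac{\partial F}{\partial d}D_2D(z)=\frac{\partial F}{\partial z}$ with $D_2D(z)$ of full rank, so the left kernel of $\frac{\partial F}{\partial z}$ is exactly the left kernel of $\frac{\partial F}{\partial d}$ (Proposition~\ref{prop:jaczerobif}); this holds for \emph{every} $u\in\mathcal{U}$ and is what forces the bifurcation to be transcritical. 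You have the right geometric intuition --- you correctly observe that both the design arc and the type-3 ancillary arc persist on both sides of $\mathcal{S}$, which is the reason a saddle-node is impossible --- but your proof strategy for (2) as written would fail. Genericity and Thom transversality are the right tools only for the strict inequalities in condition (3) (where the paper reduces them to $c_1u_x+c_2u_{xx}\neq 0$ at a zero of $u$ and applies Theorem~\ref{th:thom}).
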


The result readily extends to formations that contain the 2-cycles as a subformation:

\begin{Corollary} Any formation that contains the 2-cycles as a subformation is not robustly almost-sure  stabilizable for an open set of design frameworks.
\end{Corollary}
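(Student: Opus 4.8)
The plan is to prove the corollary by reduction to Theorem~\ref{th:notstab}, exploiting the decentralized structure of the dynamics encoded in the subformation definition. The key observation is that a subformation is \emph{autonomous}: by the definition given in Figure~\ref{fig:subformation} and the accompanying text, if $G'=(V',E')$ underlies a subformation of $G$, then every outgoing edge from a vertex in $V'$ lands on a vertex in $V'$ and belongs to $E'$. Consequently the dynamics $\dot x_i = u_i(\cdots)$ for $i$ with $v_i \in V'$ depend only on the relative positions $z_j$ associated to edges $e_j \in E'$, hence only on the states of agents in $V'$. In other words, the coordinates $\{x_i : v_i \in V'\}$ evolve by a closed subsystem that is precisely a 2-cycles formation control system with some induced feedback laws.

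First I would fix an arbitrary robust control law $u$ for the full formation $G$ and restrict attention to the invariant subspace $M' = \{x \in M : \text{positions of } V' \text{ free, positions of } V\setminus V' \text{ arbitrary}\}$; more carefully, I would note that the projection onto the $V'$-coordinates yields a well-defined 2-cycles control system because no edge from $V'$ reaches outside $V'$. The restricted feedback laws inherit membership in the class $\mathcal{U}$ of Definition~\ref{def:defcont} (the compatibility conditions $u_i(d_j;0)=0$ and the isolated-zero property are preserved under restriction, since they are conditions on the same component maps). Robustness is likewise inherited: a perturbation of the restricted law extends to a perturbation of the full law that acts trivially on the complementary coordinates, so if the restricted stabilization failed to be robust, the full one would too.

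Then I would apply Theorem~\ref{th:notstab} to this restricted 2-cycles subsystem: there is an open set of design edge-lengths $d' \in \mathcal{L}'$ (the feasible edge-lengths of the 2-cycles) for which an ancillary equilibrium of the subsystem is stable. The final step is to lift this instability upward. An equilibrium of the subsystem that is stable within the $V'$-coordinates corresponds, together with any equilibrium completion of the remaining agents, to an equilibrium of the full system; and because the subsystem block sits as a diagonal block of the full Jacobian (the $V'$-dynamics do not depend on $V\setminus V'$), the eigenvalues of the subsystem Jacobian are a subset of the eigenvalues of the full Jacobian. Thus a stable \emph{ancillary} configuration of the subformation forces the corresponding full configuration to fail the condition $\mathcal{E}_s \subset \mathcal{E}_d$: either it is an ancillary equilibrium of $G$ that is stable, or (if the remaining block contributes an unstable direction) it is still an undesired stable-within-subsystem configuration witnessing non-almost-sure stability. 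Openness of the offending set of design frameworks for $G$ follows from openness of the corresponding set for the 2-cycles together with the fact that edge-lengths of $G$ restrict surjectively and continuously onto those of the subformation.

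The main obstacle I expect is the lifting step, specifically the bookkeeping of the full Jacobian's block structure and verifying that a stable equilibrium of the subsystem genuinely produces a \emph{stable} configuration of the full formation that is not a design target — one must rule out the possibility that the complementary agents' dynamics destroy stability or that the completion accidentally satisfies all edge-length constraints. The block-triangular (indeed block-diagonal in the $V'$ rows) structure of the Jacobian resolves the eigenvalue concern, but care is needed to argue that the configuration lies outside $\mathcal{E}_d$ for the full graph, which follows because the ancillary configuration already violates an edge-length constraint internal to $E' \subset E$.
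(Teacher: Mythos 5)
Your proposal is correct and follows the same route the paper intends: the paper offers no detailed argument for this corollary beyond the remark that the result ``readily extends,'' and the extension it has in mind is precisely your reduction --- the subformation condition (all outgoing edges from $V'$ stay in $E'$) makes the $V'$-dynamics an autonomous 2-cycles subsystem, Theorem~\ref{th:notstab} applies to it, and the block-triangular Jacobian plus the violated edge-length constraint in $E'\subset E$ lift the stable ancillary equilibrium (or at least its codimension-zero basin) to the full formation. Your explicit handling of the case where the complementary block is unstable, via the open set of initial conditions whose $V'$-part converges to an ancillary configuration, is exactly the right way to close the only gap in the one-line version.
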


\subsection{Proof of the main result}

We rewrite the dynamics in terms of the $z$ variables

\begin{equation}
\label{eq:dynr}
 \left\lbrace \begin{array}{l}
\dot z_1 = u_2 z_2 - u_1 z_1- u_5z_5\\
\dot z_2 = u_3 z_3 - u_2 z_2\\
\dot z_3 =u_1 z_1+ u_5 z_5-u_3 z_3 \\
\dot z_4 =u_3 z_3  - u_4 z_4 \\
\dot z_5 =u_4 z_4 - u_1 z_1- u_5z_5
\end{array} \right.
\end{equation}
 where the dependence of the $u_i$ on $d_i$ and $z_i$'s is given in Equation~\eqref{eq:dyns}. We denote by $F(z)$ the right-hand side of Equation~\eqref{eq:dynr} and set $$\mathcal{F} = \left\lbrace F(z) \ | \ u_i \in \mathcal{U} \right\rbrace. $$

We now show that  the system of Equation~\ref{eq:dynr} has the logistic equation as normal form at $\mathcal{S}_0$.

\begin{Theorem}\label{th:tech1} A transcritical bifurcation at frameworks with $d$ in an open set in $\mathcal{S}_0$ is generic for systems in $\mathcal{F}$. 
\end{Theorem}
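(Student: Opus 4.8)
The goal is to verify the three Sotomayor conditions at frameworks with $d\in\mathcal{S}_0$, using the signed length of $z_5$ (or equivalently a parameter transverse to $\mathcal{S}$ inside $\mathcal{L}$) as the scalar bifurcation parameter $\mu$. The natural strategy is: (i) identify the degeneracy — the zero eigenvalue of the Jacobian of $F$ — as coming precisely from the alignment of $z_1$ and $z_5$ that defines $\mathcal{S}_0$; (ii) compute the corresponding left and right eigenvectors $w,v$; (iii) check the nondegeneracy/transversality conditions (2) and (3) on the parameter derivative and the Hessian; and (iv) invoke the genericity clause of Theorem~\ref{th:soto} to conclude that the transcritical bifurcation persists for a $\mathcal{U}$-generic choice of controls on an open subset of $\mathcal{S}_0$.

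\textbf{Locating the zero eigenvalue.} First I would linearize $F$ at a framework corresponding to $d\in\mathcal{S}_0$. The key structural fact, which I expect to drive the whole argument, is that the Jacobian of $F$ factors through the rigidity data: the error-gradient part of $D F$ is built from $R = D(z)A_m^{(2)}$ of Lemma~\ref{lem:propS}, while the terms coming from the two-co-leader vertex $x_1$ contribute the combination $u_1 z_1 + u_5 z_5$ appearing in $\dot z_1,\dot z_3,\dot z_5$. At a design equilibrium the errors vanish, so only the derivatives of $u_1,u_5$ in the $z_1^Tz_5$ slot and the alignment of $z_1,z_5$ matter. When $z_1\parallel z_5$ — i.e.\ exactly on $\mathcal{S}_0$ — the map $z\mapsto u_1 z_1 + u_5 z_5$ loses a direction of control (the two vectors span a line rather than the plane), and this is what forces a zero eigenvalue. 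The infinitesimal rigidity established in part (2) of Lemma~\ref{lem:propS} guarantees $R$ has full rank $2n-3=5$, so the zero eigenvalue is \emph{simple} and isolated, matching condition (1); the remaining eigenvalues can be arranged to be negative by choosing a compatible $u\in\mathcal{U}$ with the right sign on its error-slope, as in the acyclic case.

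\textbf{Eigenvectors and the nondegeneracy conditions.} With the zero eigenvalue localized, I would extract $v$ as the right null vector of $DF$ — geometrically the infinitesimal motion that keeps all edge-errors to first order fixed while exploiting the $z_1$–$z_5$ alignment — and $w$ as the corresponding left null vector, obtained from the cokernel structure of $A_m^{(2)}$ (spanned by $[0,0,1,1,1]^T$ and $[1,1,1,0,0]^T$ tensored with $\R^2$, per Lemma~\ref{lem:propS}). Condition (2), $w^T \partial_\mu F\, v = 0$ (more precisely the transcritical, as opposed to saddle-node, branching condition that the $\mu$-derivative lie in the range), and the two inequalities in condition (3) then become explicit scalar expressions in $\|z_1\|,\|z_5\|$, the angle between them, and the first and second derivatives of $u_1,u_5$ at $e=0$. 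The plan is to show the Hessian quantity $w^T\partial^2_x F(v,v)$ and the mixed derivative $w^T\partial_{x\mu}F\,v$ are nonzero generically — they are polynomial in the $z_i$ and the control-derivatives, hence vanish only on a proper closed subvariety, leaving an open dense (in fact full-measure) subset of $\mathcal{S}_0$ where they survive.

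\textbf{Main obstacle and conclusion.} The hard part is condition (3): verifying that the Hessian pairing does not identically vanish. Because the controls $u_i$ range over the whole class $\mathcal{U}$ (only constrained by $u_i(d;0)=0$ and having an isolated zero), I must ensure the nonvanishing is not an artifact of one control but holds \emph{for generic} $u\in\mathcal{U}$ — this is precisely why Sotomayor's center-manifold-free formulation is indispensable, since I cannot track a normal-form change of coordinates across all of $\mathcal{U}$. I would argue that the Hessian term contains a contribution from the second-order expansion of $u_1 z_1 + u_5 z_5$ transverse to the $z_1$–$z_5$ line that is linearly independent of the degenerate direction, so it can be made nonzero by an arbitrarily small perturbation of the control; combined with the genericity statement at the end of Theorem~\ref{th:soto}, this shows the set of systems in $\mathcal{F}$ exhibiting the transcritical bifurcation at some $d$ in an open subset of $\mathcal{S}_0$ is itself generic, which is the claim.
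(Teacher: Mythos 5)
Your overall strategy---verify Sotomayor's three conditions at a framework with $z_1\parallel z_5$, with the alignment of $z_1$ and $z_5$ as the source of the simple zero eigenvalue---is the same as the paper's, but two of your steps contain genuine gaps. First, you locate the left null vector $w$ in ``the cokernel structure of $A_m^{(2)}$.'' That cokernel accounts for the $2n-3$ zero eigenvalues that are present at \emph{every} framework because the $z$-coordinates are redundant (Corollary~\ref{cor:eigdefJ}); it is unrelated to $\mathcal{S}_0$. The bifurcating eigenvalue lives in the reduced matrix $J=D(z)A_eD(z')^T$, whose first and fifth columns become proportional exactly when $z_1\parallel z_5$ (Corollary~\ref{cor:singform}), and $w$ must be a left null vector of that matrix. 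Relatedly, you give no mechanism for condition (2): the paper does not check it by computation but by the structural identity $\frac{\partial F}{\partial d}\,D_2D(z)=\frac{\partial F}{\partial z}$ valid on $\mathcal{S}_0$ with $D_2D(z)$ of full rank (Lemma~\ref{lem:dfxdfd}), which forces the left kernels of $\frac{\partial F}{\partial z}$ and $\frac{\partial F}{\partial d}$ to coincide (Proposition~\ref{prop:jaczerobif}). Without such an identity it is not clear that your parameter choice satisfies condition (2) at all.

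Second, and most seriously, your argument for condition (3) rests on genericity in the wrong variables. The claim is genericity over the class $\mathcal{F}$ of decentralized control laws, not over the design equilibrium $d$, so ``vanishes only on a proper closed subvariety'' of configurations does not address it; and Sotomayor's genericity clause is over \emph{all} one-parameter families of vector fields, whereas a property generic in that space need not remain generic on the constrained subset $\mathcal{F}$---the unavoidability of singularities under information-flow constraints is precisely the point of the paper, so this restriction cannot be waved through. The paper's route is to reduce the Hessian pairing to the explicit scalar form $c_1u_x+c_2u_{xx}$ with $c_1,c_2$ depending continuously on the framework, to verify $c_1,c_2\neq 0$ at one concrete configuration (hence on an open set of $d$), and then to apply Thom's transversality theorem (Theorem~\ref{th:thom}) in the jet space $J^2(\R,\R)$ of the scalar control $u$: the locus $\lbrace u=0,\ c_1u_x+c_2u_{xx}=0\rbrace$ has codimension $2$ while the $2$-jet extension has one-dimensional image, so generic $u\in\mathcal{U}$ avoids it. Your ``arbitrarily small perturbation of the control'' yields at best density; it is the jet-transversality argument that upgrades this to genericity within $\mathcal{F}$.
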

We will prove Theorem~\ref{th:notstab} as a corollary of this result. We prove Theorem~\ref{th:tech1} in several steps. 

\begin{Proposition}\label{prop:jaczerobif}
Let $d\in \mathcal{S}_0$.  There is a non-zero vector $w \in \R^{10}$ such that $w^T\frac{\partial F}{\partial z}|_{e_i=0,d}= w^T \frac{\partial F}{\partial d}|_{d}=0$ for at least one framework attached to $d$.
\end{Proposition}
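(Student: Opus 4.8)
The plan is to exploit the factored form of the dynamics. Writing $g(z) = (u_1 z_1, \ldots, u_5 z_5) \in \R^{10}$, a direct inspection of Equation~\eqref{eq:dynr} shows that $F(z) = A_e^{(2)} g(z)$, where $A_e^{(2)} = A_e \otimes I$ is the edge-adjacency matrix of the 2-cycles tensored with the $2\times 2$ identity. I would differentiate this identity at a design framework attached to $d$. The crucial simplification is that at such a framework all the errors $e_i$ vanish, hence by the compatibility of the controls (Definition~\ref{def:defcont}) every $u_i = 0$ there; consequently every term of $\partial F/\partial z$ in which a factor $u_i$ is left undifferentiated drops out, and only the derivatives of the coefficients survive.

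Next I would compute those surviving derivatives. Each $u_i$ depends on $z$ only through the errors $e_j = z_j^T z_j - d_j$: the dependence of $u_1, u_5$ on $z_1^T z_5$ contributes nothing at the framework, because the compatibility condition forces $u_1, u_5$ to vanish identically on $\lbrace e_1 = e_5 = 0 \rbrace$, so their derivative in the $z_1^T z_5$ direction is zero there. Thus $\nabla u_i = \sum_j (\partial u_i/\partial e_j)\, 2 z_j$, supported on the relevant slots, and collecting these gives $\partial g/\partial z|_{e=0} = 2\, D(z)^T P\, D(z)$, where $P$ is the $5\times 5$ matrix of gains $P_{ij} = \partial u_i/\partial e_j$ and $D(z)^T = \mathrm{diag}(z_1, \ldots, z_5)$. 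The same bookkeeping applied to the parameter $d$, holding $z$ fixed, yields $\partial g/\partial d = D(z)^T Q$ for some $5\times 5$ matrix $Q$, because each block $\partial g_j/\partial d$ equals $z_j$ times a covector. Therefore both
\begin{equation*}
\frac{\partial F}{\partial z}\Big|_{e_i=0,d} = 2\, A_e^{(2)} D(z)^T P\, D(z) \quad\text{and}\quad \frac{\partial F}{\partial d}\Big|_{d} = A_e^{(2)} D(z)^T Q
\end{equation*}
share the common left factor $A_e^{(2)} D(z)^T$.

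This shared factor is the heart of the argument: any row vector $w^T$ annihilating $A_e^{(2)} D(z)^T$ annihilates both Jacobians at once, so it suffices to produce a nonzero $w$ with $D(z)(A_e^{(2)})^T w = 0$, i.e. $w \in \ker\big(D(z)(A_e^{(2)})^T\big)$. Being the kernel of a map $\R^{10} \to \R^5$ this space is always nontrivial, which already yields the statement; I would however pin down the framework by means of the hypothesis $d \in \mathcal{S}_0$. When $d \in \mathcal{S}_0$ there is a framework with $z_1$ parallel to $z_5$, and since columns $1$ and $5$ of $A_e$ coincide, the rows of $D(z)(A_e^{(2)})^T$ corresponding to edges $1$ and $5$ are obtained from these identical columns by inserting $z_1^T$, respectively $z_5^T$; parallelism of $z_1$ and $z_5$ then makes the two rows linearly dependent. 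Hence the rank of $D(z)(A_e^{(2)})^T$ drops to $4$ and its kernel grows to dimension $6$, so a nonzero $w$ certainly exists at that framework; this extra kernel direction is precisely the genuine singular direction that the later steps will feed into Sotomayor's theorem.

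The step I expect to require the most care is the derivative bookkeeping, specifically verifying that the $z_1^T z_5$--dependence of the two-co-leader controls contributes nothing at the framework---so that $\partial g/\partial z$ really collapses to $2\, D(z)^T P\, D(z)$---and that, holding $z$ fixed, each block of $\partial g/\partial d$ is proportional to $z_j$. Once these two facts are in place the common factorization through $A_e^{(2)} D(z)^T$ is immediate, and the conclusion follows from a dimension count on $\ker\big(D(z)(A_e^{(2)})^T\big)$, sharpened by the parallelism $z_1 \parallel z_5$ that defines $\mathcal{S}_0$.
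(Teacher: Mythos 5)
Your proof is correct and takes essentially the same route as the paper: both arguments factor $\frac{\partial F}{\partial z}$ and $\frac{\partial F}{\partial d}$ through $A_e^{(2)}$ and the block-diagonal matrices built from the $z_i$, so that the two Jacobians share a common rank-deficient left factor whose left kernel supplies $w$, the extra kernel direction coming from the coincidence of columns $1$ and $5$ of $A_e$ combined with $z_1\parallel z_5$. The paper packages the same computation as the identity $\frac{\partial F}{\partial d}D_2D(z)=\frac{\partial F}{\partial z}$ with $D_1,D_2$ invertible diagonal (Lemma~\ref{lem:dfxdfd}), which gives the two annihilation conditions as \emph{equivalent}---the form actually needed later, since the $w$ fed into Sotomayor's theorem is the specific left eigenvector of the nontrivial zero eigenvalue rather than an arbitrary element of the left kernel; your identification of the extra kernel direction created by $z_1\parallel z_5$ addresses this point.
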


We denote by $u_{x}(d_1,d_2;x,y,z)$ the  derivative of $u$ with respect to $x$, and similar definitions hold for $u_y$ and $u_z$. We need the following lemmas:

\begin{Lemma}\label{prop:jaclow}
Set $$z'_i= 2 (u_{1x}z_i+u_{2x}z_j)$$ and $$z'_j=2(u_{1y}z_i+u_{2y}z_j)$$ if $z_i$  originates from vertex with outvalence two and outgoing edges $z_i$ and $z_j$, and $$z'_i =2 u_x z_i$$ if $z_i$ originates from a vertex with outvalence one. 
The Jacobian at a design equilibrium of a formation control system is 
\begin{equation}\label{eq:defJ}
\frac{\partial F}{\partial z }=A_e^{(2)}D(z')^TD(z).
\end{equation} 
\end{Lemma}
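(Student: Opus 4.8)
The goal is to compute the Jacobian $\frac{\partial F}{\partial z}$ of the reduced dynamics \eqref{eq:dynr} at a design equilibrium and show it factors as $A_e^{(2)}D(z')^TD(z)$. My plan is to differentiate $F$ componentwise and then recognize the resulting expression as a product of three structured matrices: the edge-adjacency structure $A_e^{(2)}$ on the outside, a $D(z')$ built from the derivatives of the control laws, and the $D(z)$ carrying the error-gradient information. First I would record that each $u_i$ depends on $z$ only through the errors $e_i = z_i^Tz_i - d_i$ (and, for outvalence-two vertices, through $z_i^Tz_j$), so that $\frac{\partial e_i}{\partial z_k} = 2 z_i^T \delta_{ik}$ — this is exactly the row structure encoded by $D(z)$, since $D(z)$ has $z_i^T$ in its $i$-th block row.

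The key computation is the chain rule applied to a generic term $u_i z_i$ appearing in $\dot z_k$. Since we evaluate at a design equilibrium, all $e_i = 0$, and by the compatibility hypothesis in Definition~\ref{def:defcont} every $u_i$ vanishes there; hence when differentiating $u_i z_i$, the term $u_i \frac{\partial z_i}{\partial z}$ drops out and only $z_i \frac{\partial u_i}{\partial z}$ survives. This is the crucial simplification that makes the Jacobian factor cleanly — without the vanishing of $u_i$ at the equilibrium, there would be an extra term proportional to the incidence structure alone. Differentiating $u_i$ via the chain rule through $e_i$ (and $z_i^Tz_j$ for two-co-leader vertices) produces precisely the combinations $2(u_{1x}z_i + u_{2x}z_j)$ and $2 u_x z_i$ defined as $z_i'$ in the statement. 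Collecting $z_i \otimes (z_i')^T$ gives the block-diagonal factor $D(z')^T$ paired with $D(z)$.

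Finally I would match the outer combinatorial pattern. The way the $u_i z_i$ terms are distributed with signs across the five rows of \eqref{eq:dynr} — each edge contributing positively to the equation of the edge it feeds into and negatively to its own — is exactly the sign pattern of the edge-adjacency matrix $A_e$ of Definition~\ref{def:ae}; tensoring with the $2\times 2$ identity (because each $z_i \in \R^2$) yields $A_e^{(2)}$. Assembling the three factors in order gives \eqref{eq:defJ}. The main obstacle is bookkeeping: correctly tracking the signs and the Kronecker-product block structure so that the three factors multiply out to reproduce every entry of the directly-differentiated Jacobian. I would verify the factorization by checking it against one or two explicit rows of \eqref{eq:dynr} (say $\dot z_1$ and $\dot z_3$, which involve the outvalence-two vertex $x_1$) rather than grinding through all five, since the remaining rows follow the same single-co-leader pattern.
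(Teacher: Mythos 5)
Your plan follows the paper's proof almost exactly: differentiate each term $u_i z_i$ by the chain rule, use the vanishing of the controls at a design equilibrium to kill the $u_i I_2$ term, read off the blocks $\pm z_k' z_k^T$, and recognize the sign pattern as $A_e$ tensored with $I_2$. The paper does precisely this, computing the four partial derivatives $\frac{\partial}{\partial z_i}(u_1 z_i)$, $\frac{\partial}{\partial z_j}(u_1 z_i)$, $\frac{\partial}{\partial z_i}(u_2 z_j)$, $\frac{\partial}{\partial z_j}(u_2 z_j)$ explicitly and then matching against Definition~\ref{def:ae}.

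There is, however, one gap in your argument. You claim that differentiating $u_1(e_i,e_j,z_i^Tz_j)$ through its arguments ``produces precisely'' the combinations $2(u_{1x}z_i+u_{2x}z_j)$ defining $z_i'$. It does not: the chain rule through the third argument $z_i^Tz_j$ contributes additional terms such as $u_{1z}\,z_i z_j^T$ in the $(i,i)$ block and $u_{1z}\,z_i z_i^T$ in the $(i,j)$ block. These terms do \emph{not} fit the factorization $D(z')^TD(z)$, whose $k$-th block column must end in $z_k^T$, so if they survived the lemma would be false as stated. They vanish only because the compatibility condition of Definition~\ref{def:defcont} requires $u_i(d_j,d_k;0,0,z)=0$ \emph{for all} $z$, which forces not just $u=0$ but also $u_z=0$ at every design equilibrium --- a fact the paper invokes explicitly (``$u_z=u=0$ at a design equilibrium''). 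You used the compatibility hypothesis to kill $u_i\frac{\partial z_i}{\partial z}$ but not to kill the $u_z$ cross-terms; adding that one observation closes the gap and the rest of your bookkeeping goes through.
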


\begin{proof}
We first observe that 
\begin{eqnarray*}
\frac{\partial }{\partial z_i}u_1(e_i,e_j,z_i^Tz_j)z_i & = & 2u_{1x} z_iz_i^T+u I_2+2u_{1z}z_iz_j^T \\
\frac{\partial}{\partial z_j} u_1(e_i,e_j,z_i^Tz_j)z_i & = & 2u_{1y} z_iz_j^T+2u_{1z} z_iz_j^T\\ 
\frac{\partial}{\partial z_i} u_2(e_i,e_j,z_i^Tz_j)z_j & = & 2u_{2x} z_jz_i^T+2u_{2z} z_jz_i^T\\
\frac{\partial}{\partial z_j} u_2(e_i,e_j,z_i^Tz_j)z_j & = & 2u_{2y} z_jz_j^T+2u_{2z} z_jz_j^T
\end{eqnarray*}
where we omitted the arguments of the functions on the right-hand side.
Recall from Definition~\ref{def:defcont} that  $u_z=u=0$ at a design equilibrium. Hence, if $z_i$ originates from a vertex with two outgoing edges with $$\dot z_i=F_i(z) = \ldots - u_1(e_i,e_j,z_i^Tz_j)z_i- u_2(e_i,e_j,z_i^Tz_j)z_j$$ then:
\begin{eqnarray*}
\frac{\partial F_i}{\partial z_i}&=& -2u_{1x} z_iz_i^T-2u_{2x} z_jz_i^T= - z_i'z_i^T\\
\end{eqnarray*}
Similarly, $$\frac{\partial F_i}{\partial z_j} = -2z'_jz_j^T.$$
If $z_k$ originates from an agent with a single leader, we have:
$$\frac{\partial F_k}{\partial z_k}=u_x z_jz_j^T=z'_jz_j^T.$$ In general, if $z_k$ appears in $F_l$, then $$\frac{\partial F_l}{\partial z_k} = \pm z'_kz_k^T$$ where the sign is negative if both $z_l$ and $z_k$ are leaving the same vertex and positive if $z_l$ is leaving the vertex to which $z_k$ leaves. Putting the equations above together and recalling Definition~\ref{def:ae} of $A_e$, we get the result.
\end{proof}

We define \begin{equation}\left\lbrace \begin{array}{rcl}
z''_1 &=&z'_1 +  \frac{\partial u_1}{\partial d_1} z_1 + \frac{\partial u_5}{\partial d_1}z_5\\
z''_2 &=&z'_2+  \frac{\partial u_2}{\partial d_2} z_2\\
z''_3 &=&z'_3+  \frac{\partial u_3}{\partial d_3} z_3\\
z''_4 &=&z'_4+  \frac{\partial u_4}{\partial d_4} z_4\\
z''_5 &=&z'_5 +  \frac{\partial u_1}{\partial d_5} z_1 + \frac{\partial u_5}{\partial d_5}z_5
\end{array}\right.
\end{equation}


\begin{Lemma} The Jacobian of $F$ with respect to the parameters $d$ at a design equilibrium is given by
\begin{equation}\label{eq:dfddapp}
\frac{\partial F}{\partial d} = A_e^{(2)} D(z'')^T.
\end{equation}

\end{Lemma}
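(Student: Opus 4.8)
The plan is to mirror the proof of Lemma~\ref{prop:jaclow}, exploiting the fact that the entire vector field factors through the constant matrix $A_e^{(2)}$. Writing $U(z,d)=(u_1 z_1, u_2 z_2,\ldots,u_5 z_5)\in\R^{10}$ for the stacked vector of per-edge control terms, the dynamics \eqref{eq:dynr} can be recorded as the single matrix identity $F = A_e^{(2)}U(z,d)$. Indeed, $\dot z_i = \dot x_{h(i)}-\dot x_{t(i)}$, where $h(i)$ and $t(i)$ denote the head and tail of edge $e_i$, and each agent obeys $\dot x_v=\sum_{t(k)=v}u_k z_k$; hence the coefficient of $u_k z_k$ in $F_i$ is $+1$ when $h(i)=t(k)$, is $-1$ when $t(i)=t(k)$, and is $0$ otherwise, which is exactly the entry $A_{e,ik}$ of Definition~\ref{def:ae}. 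This is the same bookkeeping already used to obtain \eqref{eq:defJ}.

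Since $A_e^{(2)}$ is constant, differentiation commutes with it, so $\frac{\partial F}{\partial d}=A_e^{(2)}\frac{\partial U}{\partial d}$, and it suffices to compute $\frac{\partial U}{\partial d}$ and show $A_e^{(2)}\frac{\partial U}{\partial d}=A_e^{(2)}D(z'')^T$. The dependence of $U_k=u_k z_k$ on $d$ is twofold: the control $u_k$ depends on the target $d_k$ \emph{explicitly}, through its parameter slot, and \emph{implicitly}, through the edge error $e_k=z_k^Tz_k-d_k$. The implicit channel is governed by the constant $\partial e_k/\partial d_k$, so it contributes a term built from the same error-derivatives $u_{kx}$ that define $z'_k$ in Lemma~\ref{prop:jaclow}; combining it with the explicit derivatives $\frac{\partial u_k}{\partial d_k}$ produces precisely the vector $z''_k$ as displayed. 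For the single-leader edges $k\in\{2,3,4\}$ the control $u_k$ depends only on $d_k$, so the corresponding column of $\frac{\partial U}{\partial d}$ has a single nonzero block and $\frac{\partial F_i}{\partial d_k}=A_{e,ik}z''_k$ follows at once.

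The one point requiring care --- and the main obstacle --- is the two-leader vertex $v_1$, from which both $e_1$ and $e_5$ emanate: the controls $u_1$ and $u_5$ each depend on both $d_1$ and $d_5$, so $\frac{\partial U}{\partial d}$ acquires off-diagonal blocks in positions $(1,5)$ and $(5,1)$ and is \emph{not} block-diagonal. The resolution is that, because $e_1$ and $e_5$ share the tail $v_1$, the columns of $A_e$ depend only on the tail of the edge, and columns $1$ and $5$ coincide, i.e. $A_{e,i1}=A_{e,i5}$ for every $i$. Left-multiplication by $A_e^{(2)}$ therefore folds the off-diagonal blocks onto the diagonal: the $d_1$-column collapses to $\frac{\partial F_i}{\partial d_1}=A_{e,i1}\big(\frac{\partial u_1}{\partial d_1}z_1+\frac{\partial u_5}{\partial d_1}z_5+\text{(implicit terms)}\big)=A_{e,i1}z''_1$, and symmetrically for $d_5$. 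This is the very same fold that, in Lemma~\ref{prop:jaclow}, produced the mixed vectors $z'_1$ and $z'_5$, so no new phenomenon arises. After the fold, $A_e^{(2)}\frac{\partial U}{\partial d}$ equals $A_e^{(2)}$ times the block-diagonal matrix whose $k$-th block is $z''_k$, namely $A_e^{(2)}D(z'')^T$, which is the claim.

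I would close by recording the structural payoff that motivates the computation: both Jacobians now carry $A_e^{(2)}$ as a common left factor, $\frac{\partial F}{\partial z}=A_e^{(2)}D(z')^TD(z)$ and $\frac{\partial F}{\partial d}=A_e^{(2)}D(z'')^T$. Consequently any left-null vector $w$ of $A_e^{(2)}$ annihilates both simultaneously, which is exactly the mechanism needed to produce the vector $w$ of Proposition~\ref{prop:jaczerobif} and to verify Sotomayor's degeneracy conditions without tracking the individual controls $u_i$.
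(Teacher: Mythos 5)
Your proof is correct and follows essentially the same route as the paper's, which simply computes $\partial F_1/\partial d_1$ entrywise, asserts ``similar relations'' for the other entries, and leaves the assembly into $A_e^{(2)}D(z'')^T$ to unstated algebraic manipulations. Your explicit factorization $F=A_e^{(2)}U(z,d)$ together with the observation that columns $1$ and $5$ of $A_e$ coincide (both edges sharing the tail $v_1$) is precisely the bookkeeping the paper elides, and it correctly explains why the non-block-diagonal $\partial U/\partial d$ nonetheless collapses to $A_e^{(2)}D(z'')^T$.
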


\begin{proof}
We have $$\frac{\partial F_1}{\partial d_1} = -(\frac{\partial u_1}{\partial d_1}+u_{1x})z_1 - (\frac{\partial u_5}{\partial d_1}+u_{5x})z_5$$ and similar relations for the other entries $\frac{\partial F_i}{\partial d_j}$.  Some algebraic manipulations yield  the result.
\end{proof}

\begin{Lemma}~\label{lem:dfxdfd} Let $d \in \mathcal{S}_0$, then  $w$ is a left eigenvector of  $\frac{\partial F}{\partial z}|_{e_i=0,d}$  with eigenvalue $0$ if and only if $w^T \frac{\partial F}{\partial d}|_{e_i=0,d} = 0$.
\end{Lemma}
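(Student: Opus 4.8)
The plan is to show that each of the two conditions is, on its own, equivalent to the single condition $w^T A_e^{(2)} D(z)^T = 0$ evaluated at the design equilibrium; the stated equivalence is then immediate. First I would use Equation~\eqref{eq:defJ}, namely $\frac{\partial F}{\partial z} = A_e^{(2)} D(z')^T D(z)$, together with the fact that for $d \in \mathcal{S}_0$ every $z_i$ is nonzero, so $D(z)$ has full row rank (as already observed in the proof of Lemma~\ref{lem:propS}). Writing $q^T = w^T A_e^{(2)} D(z')^T$, the identity $w^T \frac{\partial F}{\partial z} = q^T D(z)$ shows that $w^T \frac{\partial F}{\partial z} = 0$ forces $D(z)^T q = 0$, and injectivity of $D(z)^T$ gives $q = 0$. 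Hence $w$ is a left eigenvector of $\frac{\partial F}{\partial z}$ with eigenvalue $0$ if and only if $w^T A_e^{(2)} D(z')^T = 0$.

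The crucial step, and where the hypothesis $d \in \mathcal{S}_0$ enters, is to show that each $z'_i$ is a scalar multiple of $z_i$. For the single-leader edges $z_2, z_3, z_4$ this is immediate, since $z'_i = 2 u_{ix} z_i$. For the edges $z_1$ and $z_5$ leaving the outvalence-two vertex, $z'_1$ and $z'_5$ are by definition linear combinations of $z_1$ and $z_5$; because $d \in \mathcal{S}_0$ means precisely that $z_1 \parallel z_5$, their span is one-dimensional and $z'_1, z'_5$ are again multiples of $z_1$, respectively $z_5$. Thus $D(z')^T = D(z)^T A$ with $A = \mathrm{diag}(a_1,\dots,a_5)$, where $z'_i = a_i z_i$. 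Under the nondegeneracy satisfied by controls in $\mathcal{U}$ --- the relevant first derivatives of the $u_i$ do not vanish, so each $a_i \neq 0$ --- the matrix $A$ is invertible, and therefore $w^T A_e^{(2)} D(z')^T = 0$ if and only if $w^T A_e^{(2)} D(z)^T = 0$.

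The same argument handles the parameter derivative. By Equation~\eqref{eq:dfddapp}, $\frac{\partial F}{\partial d} = A_e^{(2)} D(z'')^T$, and by construction each difference $z''_i - z'_i$ is a multiple of $z_i$ (again using $z_1 \parallel z_5$ for the two edges at the outvalence-two vertex). Hence every $z''_i$ is a multiple of $z_i$, so $D(z'')^T = D(z)^T B$ with $B$ diagonal and, generically within $\mathcal{U}$, invertible; therefore $w^T \frac{\partial F}{\partial d} = 0$ if and only if $w^T A_e^{(2)} D(z)^T = 0$. Combining the three steps, both $w^T \frac{\partial F}{\partial z} = 0$ and $w^T \frac{\partial F}{\partial d} = 0$ are equivalent to $w^T A_e^{(2)} D(z)^T = 0$, and hence to each other.

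The main obstacle is the second step. The whole reduction hinges on $z'_1$ and $z'_5$ collapsing onto a common line through the origin, which is exactly what $z_1 \parallel z_5$ provides; away from $\mathcal{S}_0$ these vectors span a plane, the factorization $D(z')^T = D(z)^T A$ through a diagonal matrix breaks down, and the two null spaces need not coincide. The only remaining point requiring care is the invertibility of $A$ and $B$, which reduces to the non-vanishing of the first-order derivatives of the control laws and holds generically for $u \in \mathcal{U}$.
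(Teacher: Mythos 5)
Your proof is correct and follows essentially the same route as the paper: both arguments rest on the observation that for $d \in \mathcal{S}_0$ (i.e.\ $z_1 \parallel z_5$) every $z'_i$ and $z''_i$ becomes a nonzero scalar multiple of $z_i$, so that $D(z')$ and $D(z'')$ differ from $D(z)$ by invertible diagonal factors, combined with the full row rank of $D(z)$. The only cosmetic difference is that you route both conditions through the common pivot $w^T A_e^{(2)} D(z)^T = 0$, whereas the paper writes the single identity $\frac{\partial F}{\partial d}\, D_2 D(z) = \frac{\partial F}{\partial z}$ and invokes the rank of $D_2 D(z)$; these are the same argument, and your handling of the genericity needed for the diagonal entries to be nonzero matches the paper's.
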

\begin{proof}
We claim that when $z_1$ is parallel to $z_5$, we can find invertible diagonal matrices $D_1$ and $D_2$ such that
\begin{equation}\label{eq:zppdz}
D(z')=D_1D(z) = D_2D(z'').
\end{equation}Indeed, for $i=2,3,4$, it is immediate from the definitions of $z_i$, $z_i'$ and $z_i''$ that there exists $\alpha_, \beta_i \neq 0$  such that $$z_i= \alpha_i z'_i = \beta_i z''_i, i=2,3,4. $$ 
The $\alpha_i, \beta_i$ are the entries of $D_1$ and $D_2$. From Lemma~\ref{prop:jaclow}, we have
\begin{equation}
\label{eq:reppropjaclow}
\frac{\partial F}{\partial z} = A_e^{(2)} D(z')^TD(z)
\end{equation}
Putting Equations~\eqref{eq:dfddapp},~\eqref{eq:zppdz},~\eqref{eq:reppropjaclow} together, we obtain that $$\frac{\partial F}{\partial d} D_2 D(z)= \frac{\partial F}{\partial z}.$$

If all the $z_i$ are non-zero, then $D_2D(z)$ is of full rank and we conclude that \begin{equation}
\label{eq:kerndfddapp}
w^T \frac{\partial F}{\partial d} = 0 \Leftrightarrow w^T \frac{\partial F}{\partial z} = 0.\end{equation}
\end{proof}

Formation control systems are invariant under an action of the Euclidean group $SE(2)$ on $\R^2$. Hence the Jacobian of the dynamics in the $x$ variables will always have at an equilibrium  three zero eigenvalues corresponding to the three dimensions of $SE(2)$. For the dynamics in the $z$ variables, there are additional zeros from the redundancy of the $z$. For example, described in the $z$ coordinates, the 2-cycles has 10 dimensions, but four degrees of freedom are lost to $z_1+z_2+z_3=z_3+z_4+z_5=0$ and one additional degree of  freedom is lost to the $SE(2)$ invariance, since the invariance under translation is taken into account in the $z$ variables and only the invariance under rotations, which is one dimensional, remains. The following result addresses this point. 

\begin{Corollary}\label{cor:eigdefJ}Let $G$ be the graph of a minimally  rigid formation with edge adjacency matrix $A_e$.  The eigenvalues of the Jacobian of $F(z)$ at a non-singular design equilibrium are the eigenvalue zero with algebraic multiplicity $2n-3$  and the eigenvalues of \begin{equation}\label{eq:defJ}J = D(z)A_eD(z')^T.\end{equation}
\end{Corollary}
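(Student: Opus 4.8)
The plan is to derive the corollary directly from the factored expression for the Jacobian established in Lemma~\ref{prop:jaclow}, combined with the elementary fact that for rectangular matrices $P$ and $Q$ the products $PQ$ and $QP$ have the same nonzero spectrum with the same algebraic multiplicities. Concretely, Lemma~\ref{prop:jaclow} gives, at a design equilibrium (where $u=u_z=0$, so the extraneous terms drop and the formula holds exactly),
\begin{equation*}
\frac{\partial F}{\partial z} = A_e^{(2)} D(z')^T D(z),
\end{equation*}
a $2m \times 2m$ matrix. I would group this as $PQ$ with $P = A_e^{(2)} D(z')^T \in \R^{2m \times m}$ and $Q = D(z) \in \R^{m \times 2m}$, so that
\begin{equation*}
QP = D(z)\, A_e^{(2)} D(z')^T = J,
\end{equation*}
an $m \times m$ matrix whose $(i,j)$ entry is $A_{e,ij}\, z_i^T z_j'$ by the block structure of the $D(\cdot)$ matrices and of $A_e^{(2)} = A_e \otimes I_2$. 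The $A_e$ appearing in the statement is understood as this Kronecker-extended matrix, in keeping with the $A_m \mapsto A_m^{(2)}$ convention; with plain $A_e$ the product is not conformable.

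The second step is to invoke the Sylvester-type determinant identity in its characteristic-polynomial form,
\begin{equation*}
\det\!\left(\lambda I_{2m} - PQ\right) = \lambda^{\,2m-m}\det\!\left(\lambda I_{m} - QP\right) = \lambda^{\,m}\det\!\left(\lambda I_{m} - J\right),
\end{equation*}
valid for any $P \in \R^{2m\times m}$, $Q \in \R^{m \times 2m}$. Since this identity records algebraic multiplicities, it shows that the spectrum of $\partial F/\partial z$ is exactly the spectrum of $J$ together with $m$ additional zero eigenvalues. The final step is purely combinatorial: because $G$ is minimally rigid in the plane, its edge count satisfies $m = 2n-3$, so the number of extra zeros is precisely $2n-3$, which yields the claimed decomposition of the spectrum.

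The argument is essentially mechanical once Lemma~\ref{prop:jaclow} is available, so there is no deep obstacle; the points requiring care are bookkeeping ones. First, one must confirm that the factored form of the Jacobian is exact \emph{at the design equilibrium} --- this is where the hypothesis enters, since away from $e_i=0$ the vanishing terms $u$ and $u_z$ reappear and the clean factorization fails. Second, one must use the determinant (rather than set-theoretic) version of the $PQ$/$QP$ identity, both to guarantee the statement about \emph{algebraic} multiplicity of the zero eigenvalue and to correctly handle any zero eigenvalues already present in $J$, which are then listed as part of $\operatorname{spec}(J)$ rather than folded into the $2n-3$ count. The role of ``non-singular'' is contextual: it ensures $D(z)$ has full rank so that the separation into the $2n-3$ redundancy/invariance zeros and the eigenvalues of $J$ is the generic one discussed before the statement, but it is not needed for the determinant identity itself, which holds unconditionally.
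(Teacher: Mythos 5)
Your argument is exactly the paper's: the published proof is a one-line appeal to Theorem 1.3.20 of Horn and Johnson (the $PQ$/$QP$ spectral identity) applied to the factorization $\frac{\partial F}{\partial z}=A_e^{(2)}D(z')^TD(z)$ from Lemma~\ref{prop:jaclow}, which is precisely the regrouping and multiplicity count you carry out, including the use of $m=2n-3$ for a minimally rigid graph. Your observation that the $A_e$ in the displayed formula for $J$ must be read as $A_e^{(2)}$ for conformability is a correct reading of the paper's notational shorthand.
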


\begin{proof}The result is a consequence of Theorem 1.3.20 in~\cite{hornjohnson90}  applied to  Lemma~\ref{prop:jaclow}.
\end{proof}

In the remainder of the paper, whenever we refer to eigenvalues and eigenvectors of $F$, we will refer to the eigenvalues and eigenvectors that do not correspond to the redundant description of the system, and to which we have access thanks to Corollary~\ref{cor:eigdefJ}.

\begin{Corollary}[Singular formations]\label{cor:singform}
 Let $d \in \mathcal S_0$. The Jacobian of the 2-cycles formation is generically of corank $1$ for at least one framework attached to  $d$. 
 \end{Corollary}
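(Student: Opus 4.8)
The plan is to establish the corank-$1$ claim by combining Lemma~\ref{lem:dfxdfd} with the rigidity computation from Lemma~\ref{lem:propS} (part 2), reduced to the $J = D(z)A_e D(z')^T$ picture afforded by Corollary~\ref{cor:eigdefJ}. First I would fix $d \in \mathcal{S}_0$ and select the framework for which $z_1$ is parallel to $z_5$, which exists by the definition of $\mathcal{S}$. Working with the reduced Jacobian $J$ rather than the full $\frac{\partial F}{\partial z}$ (whose extra zero eigenvalues are the spurious ones of multiplicity $2n-3$ identified in Corollary~\ref{cor:eigdefJ}), the goal is to show that $J$ has a one-dimensional kernel, equivalently that $\frac{\partial F}{\partial z}$ has a single ``genuine'' zero eigenvalue beyond the redundant ones.

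The key step is to exploit the factorization $\frac{\partial F}{\partial z} = A_e^{(2)} D(z')^T D(z)$ from Lemma~\ref{prop:jaclow}. Since $d \in \mathcal{S}_0$, part 2 of Lemma~\ref{lem:propS} guarantees the framework is infinitesimally rigid, so the rigidity matrix $R = D(z)A_m^{(2)}$ has full rank $2n-3 = 5$, and in particular $D(z)$ is of full rank (no $z_i$ vanishes in $\mathcal{L}_0$). The corank of $J$ is thus controlled by how the parallelism $z_1 \parallel z_5$ forces a rank drop. I would argue that when $z_1$ and $z_5$ are \emph{not} parallel, the matrices $D(z')$ and $D(z)$ give a full-rank $J$ (so the equilibrium is non-singular, consistent with the ``non-singular design equilibrium'' hypothesis in Corollary~\ref{cor:eigdefJ}), and that crossing into $\mathcal{S}_0$ degenerates exactly one direction. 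Concretely, the parallelism $z_1 = \pm c\, z_5$ makes the two rows/columns of $D(z)$ and $D(z')$ associated with edges $1$ and $5$ collapse onto a single line in $\R^2$, and it is precisely this collapse, propagated through the edge-adjacency structure of $A_e$ encoding the outvalence-two vertex $v_1$, that produces a rank deficiency of exactly one.

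To pin the corank at exactly $1$ (not $\geq 2$), I would compute the kernel directly. The relation $z_1 \parallel z_5$ yields one explicit null vector of $J$ coming from the direction orthogonal to the common line of $z_1,z_5$; Lemma~\ref{lem:dfxdfd} then certifies this is a genuine left-null vector $w$ with $w^T \frac{\partial F}{\partial d} = 0$ as well, matching the setup of Proposition~\ref{prop:jaczerobif}. To rule out a larger kernel, I would show the remaining block of $J$ (corresponding to edges $2,3,4$ of the triangle $x_1x_2x_3$ and the transverse components) is nondegenerate, using infinitesimal rigidity of the triangle subformation and the fact that $D(z)$ retains full rank on $\mathcal{S}_0$.

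The main obstacle I expect is the careful bookkeeping required to separate the two sources of zero eigenvalues: the $2n-3$ ``redundant'' zeros baked into the $z$-description and the $SE(2)$-invariance (already quotiented out by passing to $J$ via Corollary~\ref{cor:eigdefJ}) versus the \emph{one} additional genuine zero produced by the singularity in $\mathcal{S}_0$. Verifying that exactly one new null direction appears—rather than accidentally two—amounts to showing that the parallel-edge degeneracy interacts with $A_e$ and $D(z')^T D(z)$ so as to drop the rank of $J$ by precisely one. This is essentially a transversality/genericity statement: the condition $z_1 \parallel z_5$ is codimension one in $\mathcal{L}$ by part 1 of Lemma~\ref{lem:propS}, so a single constraint is imposed, and I would argue the corank jump is correspondingly $1$ for an open-dense set of such $d$, which is exactly the ``generically of corank $1$'' phrasing in the statement.
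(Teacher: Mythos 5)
Your overall strategy---pass to the reduced matrix $J=D(z)A_eD(z')^T$ via Corollary~\ref{cor:eigdefJ}, observe that $z_1\parallel z_5$ collapses the edge-$1$ and edge-$5$ data onto a single line and hence drops the rank by one, then check that the complementary part is nondegenerate---is the same skeleton as the paper's proof, which simply writes out the $5\times 5$ matrix $J$ explicitly, notes that its first and fifth columns become proportional when $z_1\parallel z_5$ (because $z_1'$ and $z_5'$ are both linear combinations of $z_1$ and $z_5$, hence collinear), and checks by inspection that the first four columns remain independent. However, two of your steps have genuine problems. First, you locate the genericity in the wrong place: you argue that because $\mathcal S$ has codimension one in $\mathcal L$, the corank drop should be one for an open-dense set of $d$. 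But the corollary fixes $d\in\mathcal S_0$ and asserts corank one \emph{generically over the systems in $\mathcal F$}, i.e.\ over the control laws $u$. The genericity is needed because the entries of $J$ involve $z_i'$, which for a valence-one vertex equals $2u_xz_i$; if $u_x$ vanished at the design equilibrium, then $z_i'=0$, the corresponding diagonal entry $-z_i^Tz_i'$ would vanish, and the corank could exceed one. The paper invokes Corollary~\ref{cor:corgent} (Thom transversality) precisely to guarantee that $u_x\neq 0$ at zeros of $u$ generically, hence that the $z_i'$ are nonzero; a codimension count on $d$ cannot substitute for this.

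Second, your identification of the kernel is off. The null vector of $J$ lives in the five-dimensional edge space $\R^5$ and has the form $(c,0,0,0,-1)^T$ where $z_5'=c\,z_1'$; it arises from the proportionality of two columns, not from ``the direction orthogonal to the common line of $z_1,z_5$'' in $\R^2$. Relatedly, infinitesimal rigidity of the triangle subformation is a statement about the rigidity matrix $R=D(z)A_m^{(2)}$ and does not transfer to nondegeneracy of a block of $J=D(z)A_eD(z')^T$, which is a different matrix; the independence of the remaining columns again rests on each $z_i'$ being a nonzero multiple of $z_i$ for $i=2,3,4$, i.e.\ on the same genericity over $u$. With these two repairs your argument would align with the paper's; as written, the ``exactly corank one, generically'' part of the claim is not established.
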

\vspace{.2cm}
\begin{proof}
A direct computation using Corollary~\ref{cor:eigdefJ}  and the edge-adjacency matrix of the 2-cycles gives 
\begin{equation*}J=D(z)A_eD(z')^T = \left[ \begin{array}{rrrrr}
-z_1^T z'_1 & z_1^T z'_2 & 0 & 0 & -z_1^T z'_5 \\
0 & -z_2^T z'_2 & z_2^T z'_3 & 0 & 0\\
z_3^T z'_1 & 0 & -z_3^T z'_3 & 0 & z_3^T z'_5 \\
0 & 0 & z_3^T z'_4 & -z_4^T z'_4 & 0 \\
-z_1^T z'_5 & 0 & 0 & z_4^T z'_5 & -z_5^T z'_5
\end{array}\right].\end{equation*}

By Corollary~\ref{cor:corgent} in the appendix, $u_i' \neq 0$ generically when it vanishes, which correspond to design equilibria. Hence $z'_i$ are generically non-zero. For the framework attached to $d \in \mathcal{S}_0$ such that $z_1$ is parallel to $z_5 $,  the first and last column are multiples of each other, and it is easy to see that the first four columns are linearly independent. The corank is higher if, in addition,  one of the $z_i$ is zero.
\end{proof}

We now prove Proposition~\ref{prop:jaczerobif}.

\begin{proof}[Proof of Proposition~\ref{prop:jaczerobif}]
Consider a framework with $d \in \mathcal{S}_0$ and $z_1$ parallel to $z_5$. From Corollary~\ref{cor:singform}, we know that $\frac{\partial F}{\partial  z}$ is generically of rank $4$. Let $w$ be an eigenvector corresponding to the zero eigenvalue. We conclude using Lemma~\ref{lem:dfxdfd} that $w^T \frac{\partial F}{\partial d} = 0$.
  \end{proof}

For clarity of the exposition, we now restrict ourselves to the system
\begin{equation}
\left\lbrace \begin{matrix}
\dot x_1 &=& u(d_1;e_1) z_1 +u(d_5;e_5) z_5 \\
\dot x_2 &=& u(d_2;e_2) z_2  \\
\dot x_3 &=& u(d_3;e_3) z_3  \\
\dot x_4 &=& u(d_4;e_4) z_4 
\end{matrix}\right. 
\end{equation} The preliminary results, proved in greater generality, make the extension of the proof below to the more general system easy.

\begin{proof}[Proof of Theorem~\ref{th:tech1}]
Fix $d_0=(d_1,d_2,d_3,d_4,d_5) \in \mathcal{S}_0$. We consider the \emph{one parameter system} where only $\mu \in \R$ is allowed to vary:
\begin{equation}
\left\lbrace \begin{matrix}
\dot x_1 &=& u(d_1;e_1) z_1 +u(d_5;e_5) z_5 \\
\dot x_2 &=& u(d_2;e_2) z_2 \\
\dot x_3 &=& u(d_3+\mu; z_3^Tz_3 - (d_3+\mu)) z_3  \\
\dot x_4 &=& u(d_4;e_4) z_4 
\end{matrix}\right. 
\end{equation}  and the corresponding equations in $z$ variables. We  prove that conditions $(1), (2)$ and $(3)$ of Theorem~\ref{th:soto} are generic for $\mathcal{F}$.
From Corollary~\ref{cor:singform} and the fact that $u' \neq 0$  generically at a zero of $u$ (Corollary~\ref{cor:corgent}),  we know that the Jacobian of the 2-cycles at $\mathcal{S}_0$ has a unique zero eigenvalue (that is not a result of the redundant description of the system) generically for $F \in \mathcal{F}$. Hence condition $(1)$ is verified. Condition $(2)$ follows from Proposition~\ref{prop:jaczerobif}.

Condition $(3)$ takes the following form: for $w$ and $v$ left and right eigenvectors of $\frac{\partial F}{\partial z}$ respectively, $$w^T \frac{\partial^2 F}{\partial z^2}(v,v) = \sum_{ijk} \frac{\partial^2 F_i}{\partial z_j\partial z_k} w_iv_jv_k \neq 0,$$ where $w$ and $v$ depend on the design equilibrium. Using the relations established in Lemma~\ref{prop:jaclow} for partial derivatives of $F_i$,  the triple sum in the above Equation can be explicitly evaluated. Rearranging terms and introducing the constants $c_{1}$, $c_{2}$ (which depend on the design equilibrium), we obtain a linear combination of first and second derivatives of  $u$:
$$w^T \frac{\partial^2 F}{\partial z^2}(v,v) = c_{1} u_{x} +  c_{2} u_{xx}.
$$ In order to verify that there is an open set of design equilibria for which the functions $c_{1}$ and $c_{2}$ are non-zero, since the functions are continuous, it suffices to find one design equilibrium at which it is the case. It is easily verified, for example, at the design equilibrium with edge lengths corresponding to the framework with $x_1=(0,0), x_2=(-2,1), x_3=(0,-1)$ and $x_4=(1/2,-1/4)$.
 
On that open set, we thus have $w^T\frac{\partial^2 F}{\partial z^2 }(v,v)=0$ only if $$c_{1} u_{x} +  c_{2} u_{xx}=0$$ when $u$ vanishes (since, by definition, $u$ vanishes at design equilibria). Let $C \subset J^2(\R,\R)$ be defined by the equations $$c_{1} u_{x} +  c_{2} u_{xx}=0$$ and $$u=0.$$  $C$ is of codimension 2 in $J^2(\R,\R)$ whereas the image of the 2-jet extension of $u$ is of dimension 1. Hence it is transversal   to $C$ if and only if  $u\neq 0$ or $c_{1} u_{x} +  c_{2} u_{xx}\neq 0$. We conclude using Thom's transversality Theorem~\ref{th:thom} that $w^T\frac{\partial^2 F}{\partial z^2}(v,v)$ is generically non-zero.

Using a similar reasoning as above, we can conclude that $w^T\frac{\partial^2 F}{\partial x \partial d }v$ is generically non-zero.  \end{proof}

\begin{proof}[Proof of Theorem~\ref{th:notstab}]

\begin{figure}[]
\begin{center}
\begin{tikzpicture}[scale=.9]
\draw[help lines, step=25pt,very thin, dotted](-2,-2) grid (2,2);

\draw[, blue,dashed] (-2,0) -- (0,0); \node (-1.4,0) [pin=-2:stable]{} ;\node at (-1.9,.3) {$\mathcal{E}_a$};
\draw[dashed, ,magenta,->] (0,0) -- ( 2,0); \node [black] at  (2.5,0) {$\mu$};

\draw[, black,->] (0,-2) -- (0,2) node (0,2.2) {Equilibria};

\draw[ , red] (-1.9,-1.5) -- (0,0); \node (1.9,0) [black,pin=275:unstable]{};\node at (1.9,1.7) {$\mathcal{E}_d$};
\node at (-1.2,-0.92) [black,pin=180:unstable]{};
\draw[, blue] (0,0) -- ( 1.9,1.5) ;
\node at (1.9,1.5) [black,pin=0:stable]{};
\node [fill=black,inner sep=1pt,label=-45:$0$] at (0,0) {}; 

\node (m1) [fill=black,inner sep=1pt,label=-45:$\mu_1$] at (-1.10,0) {}; 
\draw[dotted, black] (-1.1,0) -- (-1.1,-.87) {};
\node [fill=black,inner sep=1pt] at (-1.10,-.87) {};
\node (m2) [fill=black,inner sep=1pt,label=225:$\mu_2$] at (1.10,0) {}; 
\draw[dotted, black] (1.1,0) -- (1.1,.87) {};
\node [fill=black,inner sep=1pt] at (1.10,.87) {};

\draw [thin,-stealth ] (m1) -- +(-.5,.8);
\draw [thin,-stealth ] (m2) -- +(.5,-1);
\draw [thin,-stealth ] (-1.10,-.86) -- +(-.4,-.6);
\draw [thin,-stealth ] (1.10,.86) -- +(.4,.6);

\begin{scope}[yshift=4.4cm, xshift=-3cm]
\node [fill=black,circle, inner sep=1pt,label=90:$ x_1$] (1) at ( 0, 0) {};
\node [fill=black,circle, inner sep=1pt,label=135:$ x_2$] (2) at (-1.5 ,-.3) {};
\node [fill=black,circle, inner sep=1pt,label=180:$ x_3$] (3) at (0 ,-2) {};
\node [fill=black,circle, inner sep=1pt,label=45:$ x_4$] (4) at (1.5 ,.3) {};

\draw [-stealth,red ] (1) -- (2)node[ midway,sloped,above,font=\scriptsize]{$ d_1+\varepsilon_1$};;
\draw [-stealth] (3) -- (1)node[ midway,sloped,above,font=\scriptsize]{$d_3+\mu_1$}; ;
\draw [-stealth] (4) -- (3)node[ midway,sloped,below,font=\scriptsize]{$d_4$};;
\draw [-stealth] (2) -- (3)node[ midway,sloped,below,font=\scriptsize]{$d_2$};;
\draw [-stealth,red] (1) -- (4)node[ midway,sloped,above,font=\scriptsize]{$ d_5+\varepsilon_2$};;
\end{scope}
\begin{scope}[yshift=- 2.8cm, xshift=3cm]
\node [fill=black,circle, inner sep=1pt,label=90:$x_1$] (1) at ( 0, 0) {};
\node [fill=black,circle, inner sep=1pt,label=135:$x_2$] (2) at (-1 ,-.2) {};
\node [fill=black,circle, inner sep=1pt,label=180:$x_3$] (3) at (0 ,-2.2) {};
\node [fill=black,circle, inner sep=1pt,label=45:$x_4$] (4) at (1 ,.2) {};

\draw [-stealth,red ] (1) -- (2)node[ midway,sloped,above,font=\scriptsize]{$d_1+\varepsilon_3$};
\draw [-stealth] (3) -- (1)node[ midway,sloped,above,font=\scriptsize]{$d_3+\mu_2$}; ;
\draw [-stealth] (4) -- (3)node[ midway,sloped,below,font=\scriptsize]{$d_4$};
\draw [-stealth] (2) -- (3)node[ midway,sloped,below,font=\scriptsize]{$d_2$};
\draw [-stealth,red] (1) -- (4)node[ midway,sloped,below,font=\scriptsize]{$d_5+ \varepsilon_4$};
\end{scope}
\begin{scope}[yshift= 4.4cm, xshift=3cm]
\node [fill=black,circle, inner sep=1pt,label=90:$x_1$] (1) at ( 0, 0.2) {};
\node [fill=black,circle, inner sep=1pt,label=135:$x_2$] (2) at (-1.2 ,-.1) {};
\node [fill=black,circle, inner sep=1pt,label=180:$x_3$] (3) at (.0 ,-2.2) {};
\node [fill=black,circle, inner sep=1pt,label=45:$x_4$] (4) at (1.2 ,.1) {};

\draw [-stealth ] (1) -- (2)node[ midway,sloped,above,font=\scriptsize]{$d_1$};
\draw [-stealth] (3) -- (1)node[ midway,sloped,above,font=\scriptsize]{$d_3+\mu_2$}; ;
\draw [-stealth] (4) -- (3)node[ midway,sloped,below,font=\scriptsize]{$d_4$};
\draw [-stealth] (2) -- (3)node[ midway,sloped,below,font=\scriptsize]{$d_1$};
\draw [-stealth] (1) -- (4)node[ midway,sloped,above,font=\scriptsize]{$d_5$};
\end{scope}
\begin{scope}[yshift= -2.8cm, xshift=-3cm]
\node [fill=black,circle, inner sep=1pt,label=90:$x_1$] (1) at ( 0, -0.2) {};
\node [fill=black,circle, inner sep=1pt,label=225:$x_2$] (2) at (-1.2 ,-.1) {};
\node [fill=black,circle, inner sep=1pt,label=180:$x_3$] (3) at (.0 ,-2) {};
\node [fill=black,circle, inner sep=1pt,label=45:$x_4$] (4) at (1.2 ,.1) {};

\draw [-stealth ] (1) -- (2) node[ midway,sloped,above,font=\scriptsize]{$d_1$};
\draw [-stealth] (3) -- (1)  node[ midway,sloped,above,font=\scriptsize]{$d_3+\mu_1$}; ;
\draw [-stealth] (4) -- (3)  node[ midway,sloped,below,font=\scriptsize]{$d_4$};
\draw [-stealth] (2) -- (3)  node[ midway,sloped,below,font=\scriptsize]{$d_2$};
\draw [-stealth] (1) -- (4)  node[ midway,sloped,above,font=\scriptsize]{$d_5$};
\end{scope}
\end{tikzpicture}

\caption{\small We illustrate the stability properties of ancillary and design equilibria around $\mathcal{S}_0$. Let the vector $(d_1,d_2,d_3,d_4,d_5) \in \mathcal{S}_0$. The horizontal dashed line corresponds ancillary equilibria and the slanted line that intersects it to design equilibria.   They coincide at $\mu = 0$, as  seen in Proposition~\ref{prop:allequ};   for $\mu \neq 0$ configurations in $\mathcal{S}_0$ are ancillary equilibria.   For $\mu_1 <0$, there is an ancillary equilibrium  with $e_2,e_3,e_4 = 0$ but $e_1=\varepsilon_1$ and $e_5 =\varepsilon_2$ and $z_1$ and $z_5$ aligned. It is illustrated in the top-left corner of the figure. This equilibrium is moreover stable. For $\mu_2 >0$, there is a similar ancillary equilibria with $e_1=\varepsilon_3$ and $e_5 = \varepsilon_4$, illustrated in the bottom-right corner, but this equilibrium is unstable. We see that around the bifurcation value $\mu_0$, there is a {\it transfer of stability} from $\mathcal{E}_d$ to $\mathcal{E}_a$. The orientation may be reversed (i.e.  $\mu_1 > 0, \mu_2 <0$ and all else the same in the figure) depending on the sign of the second derivatives in Theorem~\ref{th:tech1}. }
\label{fig:log4eq}
\end{center}

\end{figure}
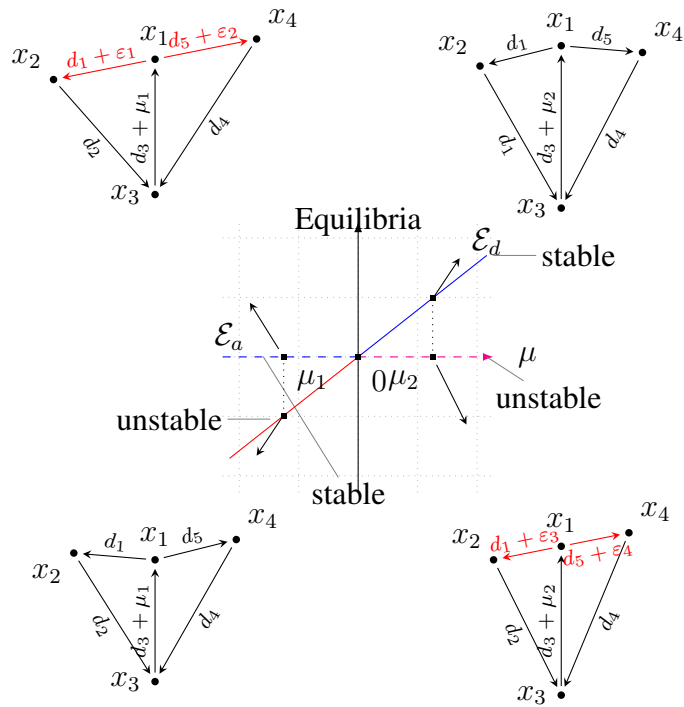
We illustrate the situation in Figure~\ref{fig:log4eq}. We will show that there is a set of positive measure  in $\mathcal{L}$ which cannot be made robustly almost surely globally stable. We do so by showing that for any framework attached to distances in that set, the existence of a stable ancillary equilibrium is generic for $\mathcal F$.

Denote by $\mathcal{S}^\varepsilon$ a tubular neighborhood of $\mathcal{S}$:  $$\mathcal{S}^\varepsilon = \lbrace d \in \mathcal{L} \mbox{ s.t. } \exists~	 d_0 \in \mathcal{S} \mbox{ with } \|d-d_0\| < \varepsilon\rbrace$$ and   $\mathcal{S}_0^\varepsilon= \mathcal{S}^\varepsilon \cap \mathcal{S}_0$. The set $\mathcal{S}^\varepsilon$ contains frameworks where $z_1$ and $z_5$ are close to parallel. These frameworks are infinitesimally rigid and non-singular.  Let $d \in \mathcal{S}_0^\varepsilon$ and $d_0 \in \mathcal{S}_0$ be such that there is $-\varepsilon<\mu<\varepsilon$ with $d=d_0+(0,0,\mu,0,0)$. Such  $d_0$ and $\mu$  exist by definition of $\mathcal{S}_0^\varepsilon$.

Because the system is invariant under mirror symmetry~\cite{belabbasSICOpart1}, the stability properties of the equilibria (a) and (c), (b) and (d) in Figure~\ref{fig:4formations} are the same. Assume without loss of generality that $u$ is such that the design equilibria for the frameworks with  $x_2$ and $x_4$ on different side of $z_3$ are stable. Because for a robust $u$, the system undergoes a transcritical bifurcation when $\mu=0$ by Theorem~\ref{th:tech1}, and because $u' \neq 0$ generically when $u$ vanishes, we have that for $\varepsilon$ small enough,  $\mathcal{E}_a$ contains the framework where $z_1$ is parallel to $z_5$ for all frameworks with $-\varepsilon<\mu<\varepsilon$. Furthermore, for either $\mu>0$ or $\mu<0$, we have that this framework is asymptotically stable, i.e. $\mathcal{E}_s \cap \mathcal{E}_a \neq \emptyset$. Hence, there is a set of positive measure of target frameworks in $\mathcal{S}_0^\varepsilon$ which contains a stable ancillary equilibrium and thus the system is not robustly almost-sure globally stable.

\end{proof}

We would like to thank Prof. B.D.O Anderson, Prof. Brockett, Prof. S. Morse as well as Alan O'Connor for helpful discussions. We are particularly grateful to Prof. Morse for introducing us to this problem.

\bibliographystyle{IEEEtran}
\bibliography{distribcontrolbib2}             

\appendix

The main tool handling  genericity and robustness in function spaces is Thom's transversality theorem. We will arrive at the result by building onto the simpler concept of transversality of linear subspaces.

\begin{figure}[ht]
\begin{center}
\begin{tikzpicture}[domain=-3:3,scale=.6] 
\draw[help lines, dotted,color=gray] (-3.1,-1) grid (3.1,5.1);
\draw[->] (-3.2,0) -- (3.2,0) node[right] {$x$}; 
\draw[->] (0,-1.1) -- (0,3.2) node[above] {$y=u(x)$};
\draw[color=black]	plot (\x,1/3*\x*\x)	node[right] {$u(x)$}; 
\draw[color=blue,dashed]	plot (\x,1/3*\x*\x+1/3*\x-.3)	node[right] {$\tilde u(x) $}; 
\draw[color=red,densely dotted] plot (\x,{1/2.9*\x*\x+1/3*\x+.5}) node[right] {$\tilde{\tilde{u}}(x)$};
\end{tikzpicture}
\caption{\small We let $\mathcal P$ be the property of vanishing with a zero derivative. Then $1- \mathcal P$ is generic and thus  $\mathcal P$  is not robust. Let $u(x)$ be a function which satisfy $\mathcal P$. For almost all perturbations, it will either vanish with a non-zero derivative---as illustrated with $\tilde u(x)$, dashed curve--- or not vanish at all---as illustrated with $\tilde{\tilde{u}}(x) $, dotted curve. Both $\tilde u(x)$ and $\tilde{\tilde{u}}(x)$ are transversal to the manifold defined by $y=0$ everywhere, whereas $u(x)$ is not.}\label{fig:illpertxsq}
\end{center}
\vspace{-.5cm}
\end{figure}
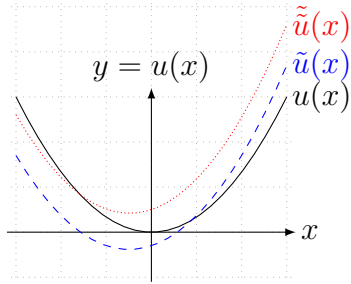

Let  $A, B \subset \R^n$ be linear subspaces. They are \emph{transversal} if $$\R^n = A \oplus B, $$ where $\oplus$ denotes the direct sum. For example, a plane and a line not contained in the plane are transversal in $\R^3$. The notion of transversality can be extended to maps as follows: given $$F_1 : \R^n \rightarrow \R^m\mbox{ and } F_2: \R^l \rightarrow \R^m,$$ we say that $F_1$ and $F_2$ are transversal at a point $(x_1,x_2) \in \R^n \times \R^l$ if one of the two following conditions is met:

\begin{enumerate}
\item $F_1(x_1) \neq F_2(x_2)$
\item If $F_1(x_1) = F_2(x_2)$, then the matrix $\left[ \begin{matrix}\frac{\partial F_1}{\partial x} & \frac{\partial F_2}{\partial x} \end{matrix}\right]^T$ is of full rank.
\end{enumerate}
In particular, if $l+n < m$ then $F_1$ and $F_2$ are transversal only where they do not map to the same point.

The notion of transversality that is of interest to us is a straightforward extension of the transversality of maps:

\begin{Definition}[Transversality]\label{def:transversality} Let $F:M \rightarrow N$ be a smooth map and let $C$ be a submanifold of $N$. Then $F$ is \emph{transversal} to $C$ at a given point if, at that point,  $F$ is transversal to the embedding $i:C \rightarrow N$ of $C$ into $N$. 
\end{Definition}
The definition is best understood from the following example:

\begin{Example}
Take $N=\R^3$ with coordinates $u,v,w$ and $C$ be the u-v plane. Let $F:\R \rightarrow \R^3:x \rightarrow [x,2x,3x]^T$. Then the map $F$ is transversal to $C$ everywhere since $F(x) \notin C$ for $x \neq 0$ and at $x=0$, $\frac{\partial F}{\partial x}$ is not in the tangent space of $C$.
\end{Example}

Let $F,G :M \rightarrow N$ be  smooth maps between smooth manifolds $M$ and $N$ . We say that $F$ and $G$ are 0-equivalent at $x_0$ if $F(x_0)=G(x_0),$  1-equivalent if in addition to being 0-equivalent,  $\frac{\partial F}{\partial x}|_{x_0}=\frac{\partial G}{\partial x}|_{x_0},$ and so forth. We define the k-jet of a smooth map to be its k-equivalence class:

\begin{Definition}  
The k-jet of $F:M \rightarrow N$ at $x_0$ is $$J_{x_0}^k(F) = \lbrace G:M \rightarrow N\mbox{ s.t. } G \mbox{ is }\mbox{k-equivalent to } F \rbrace .$$
\end{Definition}
Hence, the 0-jet of $F$ at $x_0$ is $F(x_0)$; the 1-jet is $(F(x_0), \frac{\partial F}{\partial x}|_{x_0})$, etc.  For example, the constant function $0$ and $\sin(x)$ have the same 0-jet at $x=0$ and $x$ and $\sin(x)$ have the same 1-jet at $0$. 

We define: $$J^k(M,N) = \mbox{ Space of k- jets from }M\mbox{ to } N .$$

Given a function $F:M \rightarrow N$, we call its \emph{k-jet extension} the map given by
$j^k_F(x):M \rightarrow J^K(M,N): x \rightarrow (F(x), \frac{\partial F}{\partial x}(x),\ldots, \frac{\partial^k F}{\partial x^k}(x)).$

\begin{Example} Let $M=N=\R$. The k-jet space is  $J^k(\R,\R)= \R \times \R \times \ldots \times \R = \R^{k+2}$. Take $F(x)=\sin(x)$; the $2-$jet extension of $F$ is $$j^2_{\sin}(x) = (x,\sin(x), \cos(x),-\sin(x)).$$
If we take $M=N=\R^2$ and $F(x)=Ax$ for $A \in \R^{2 \times 2}$, then $j_{Ax}^k(x)=(x,Ax, A, 0, \ldots, 0)$.
\end{Example}

While to any function $F:M \rightarrow N$, we can assign a k-jet extension $j_F^k:M \rightarrow J^k(M,N)$, the inverse is not true: there are maps $G:M \rightarrow J^k(M,N)$ which do not correspond to functions from $M$ to $N$ as there are some obvious integrability conditions that need to be satisfied. For example, if we let $$G:\R^n \rightarrow J^1(\R^n): G(x)=(x,Ax,B),$$ then  $G$ is a 1-jet extension of a function if and only if $B = A$.

The power of the transversality theorem of Thom is that it allows one  to draw conclusions about transversality properties in general, and genericity in particular, by \emph{solely looking at perturbations in jet spaces that are jet extensions}---a much smaller set than  all perturbations in jet-spaces.

We recall that the $C^r$ topology is a metric topology. It is induced by a metric that takes into account the function and its first $r$ derivatives. We have:

\begin{Theorem}[Thom's transversality]\label{th:thom} Let $C$ be a  regular submanifold of the jet space
$J^k(M,N)$. Then the set of maps $f : M  \rightarrow  N$ whose k-jet extensions are transversal to $C$
is an everywhere dense intersection of open sets in the space of smooth maps for the $C^r$ topology, $1 \leq r \leq \infty$.
\end{Theorem}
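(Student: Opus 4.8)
The plan is to deduce Thom's theorem from two classical ingredients: the parametric transversality theorem (a consequence of Sard's theorem) and a Baire-category globalization. First I would reduce the global claim to a local one. Cover $M$ by countably many coordinate charts $V_i$ with compact closures $\overline{V_i}$, and write the regular submanifold $C \subset J^k(M,N)$ as a countable union of compact pieces $C_j$. Since a residual set --- an everywhere dense intersection of open sets --- is stable under countable intersection, and the relevant space of maps is Baire in the $C^r$ topology, it suffices to prove, for each pair $(i,j)$, that the set $U_{ij}$ of maps $f$ whose $k$-jet extension $j^k f$ is transversal to $C_j$ over $\overline{V_i}$ is both open and dense.

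Openness is the easier half. Transversality of $j^k f$ to the closed piece $C_j$ over the compact set $\overline{V_i}$ is a non-degeneracy (full-rank) condition on the first derivatives of $j^k f$, hence on the derivatives of $f$ up to order $k+1$. Because $\overline{V_i}$ is compact and $C_j$ is closed, this condition is uniformly open: a sufficiently small perturbation preserves both non-intersection away from $C_j$ and the full-rank condition at intersection points. This yields the open sets $U_{ij}$.

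The heart of the matter --- and the step I expect to be the main obstacle --- is density, because one is only permitted to perturb \emph{within the class of jet extensions}: the admissible perturbations are honest maps $M \to N$, whose jet extensions form a thin, integrability-constrained subset of all maps into $J^k(M,N)$, so the elementary transversality theorem cannot simply be applied to arbitrary perturbations of $j^k f$. The resolution is to exhibit a finite-dimensional family of genuine perturbations of $f$ whose jet extensions already sweep out the fiber directions of $J^k$. Working in a chart and multiplying by a cutoff supported in $V_i$, set $f_s(x) = f(x) + \sum_{|\alpha| \le k} s_\alpha\, x^\alpha$ with parameter $s$ ranging over a small ball, and form the evaluation map $\Phi(x,s) = j^k f_s(x)$. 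Adding polynomials of degree $\le k$ lets one prescribe the $k$-jet at any single point arbitrarily, so $\partial \Phi/\partial s$ alone surjects onto the fiber of $J^k(M,N)$; hence $\Phi$ is a submersion and is transversal to $C_j$ for trivial reasons.

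Finally I would invoke the parametric transversality theorem: since $\Phi$ is transversal to $C_j$, for almost every parameter value $s$ the slice $x \mapsto \Phi(x,s) = j^k f_s(x)$ is transversal to $C_j$ as well. Choosing such regular values $s$ arbitrarily close to $0$ produces maps $f_s$ arbitrarily $C^r$-close to $f$ with $j^k f_s$ transversal to $C_j$ over $V_i$, establishing density of $U_{ij}$. Intersecting the countable family $\{U_{ij}\}$ and applying Baire then delivers the residual set of the statement. The only genuinely delicate points are matching the regularity index $r$ to the jet order $k$ so that the openness step survives, and carrying the local polynomial-perturbation argument through the chart transition maps without breaking the jet-extension constraint.
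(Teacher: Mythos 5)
The paper does not actually prove this statement: Thom's transversality theorem is quoted in the appendix as a classical tool and is used only to derive Corollary~\ref{cor:corgent}, so there is no in-paper argument to compare yours against. Judged on its own, your sketch is the standard proof (the one in Golubitsky--Guillemin or Arnold) and is essentially correct: reduction to countably many chart/compact-piece pairs, openness of transversality to a closed piece over a compact set, a finite-dimensional polynomial perturbation family whose jet evaluation map $\Phi(x,s)=j^k f_s(x)$ is a submersion because degree-$\le k$ polynomials realize every $k$-jet at a point, and then Sard via the parametric transversality theorem. Two technical points need more than the parenthetical remark you give them. First, ``adding a polynomial'' only makes sense if $f(\overline{V_i})$ lies in a single chart of $N$, so the cover must be refined jointly in source and target before the cutoff construction applies, and the cutoff forces you to prove transversality only over a slightly shrunken $\overline{V_i'}$ on which the cutoff equals one (these smaller sets must still cover $M$). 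Second, openness of $U_{ij}$ genuinely requires uniform control of derivatives of $f$ up to order $k+1$, so for $r\le k$ these sets need not be open in the $C^r$ topology; the clean conclusion is residuality in the $C^{k+1}$ (or Whitney $C^\infty$) topology, with density in coarser $C^r$ topologies following because a finer topology has fewer dense sets, not more. Neither point is a gap in the idea, but both must be handled explicitly for the argument to close in the generality the statement claims.
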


A typical application of the theorem is to prove that vector fields with degenerate zeros are not generic. We here prove a version of this result that is of interest to us. 

\begin{Corollary}\label{cor:corgent}
Functions in $\mathcal{C}^\infty(M) $ whose derivative at a zero vanish are not generic.
\end{Corollary}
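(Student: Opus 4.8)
The plan is to recognize the stated property as the negation of a transversality condition in the first jet space and then to invoke Thom's transversality Theorem~\ref{th:thom}. Write $n = \dim M$ and work in $J^1(M,\R)$, whose points are locally triples $(x,y,p)$ with $x \in M$, $y \in \R$ the value of the function and $p \in T_x^*M \cong \R^n$ its differential; thus $\dim J^1(M,\R) = 2n+1$. Inside this space I single out the set $$C = \lbrace (x,y,p) \in J^1(M,\R) \ :\ y = 0,\ p = 0 \rbrace,$$ which records exactly the $1$-jets of functions that vanish \emph{together with} their derivative. Since imposing $y=0$ and $p=0$ cuts out $1+n$ independent scalar conditions, $C$ is a regular submanifold of $J^1(M,\R)$ of codimension $n+1$, i.e. $\dim C = n$.

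The second step is a dimension count. For $F \in \mathcal{C}^\infty(M)$ the $1$-jet extension $j^1_F : M \rightarrow J^1(M,\R)$, $x \mapsto (x,F(x),dF_x)$, has an $n$-dimensional image. Because $\dim(\operatorname{im} j^1_F) + \dim C = n + n = 2n < 2n+1 = \dim J^1(M,\R)$, the image of $j^1_F$ can never fill a complement of $T C$ along an intersection point; hence transversality of $j^1_F$ to $C$ can hold only through the first clause of the definition of transversality of maps, namely by the image of $j^1_F$ being disjoint from $C$. In other words, \emph{$j^1_F$ is transversal to $C$ if and only if $F$ has no zero at which $dF$ vanishes.}

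The third step applies the theorem. By Thom's transversality Theorem~\ref{th:thom}, the set of $F$ whose $1$-jet extension is transversal to the regular submanifold $C$ is an everywhere dense intersection of open sets of $\mathcal{C}^\infty(M)$ in the $C^r$ topology. By the previous paragraph this residual set consists precisely of the functions whose derivative is nonzero at each of their zeros; hence \emph{that} property is generic. Its negation is the property $\mathcal P$ of the corollary, so by the reasoning recalled in Section~\ref{sec:sing}---if $1-\mathcal P$ is generic then $\mathcal P$ holds on at most a nowhere dense set, and a nowhere dense set contains no everywhere dense intersection of open sets---we conclude that functions whose derivative vanishes at a zero are not generic.

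The only points requiring care, and the mild obstacle, are verifying that $C$ is genuinely a regular submanifold of the asserted codimension and that the dimension inequality $2n < 2n+1$ is \emph{strict}, so that transversality to $C$ degenerates to disjointness from $C$; both are immediate here, which is why the substance of the argument is simply the correct bookkeeping of the jet-space codimension together with the passage from the generic complement to the non-genericity of $\mathcal P$.
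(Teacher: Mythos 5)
Your argument is correct, and it reaches the conclusion by a slightly different route than the paper. The paper works in the $0$-jet space $J^0(M,\R)$ with $C$ the zero section $\{(x,0)\}$, a codimension-one submanifold; there transversality of the graph of $u$ to $C$ at a point where $u$ vanishes is a full-rank condition on a small matrix, which unwinds to $u'\neq 0$ at the zero, and Thom's theorem then makes this generic. You instead pass to $J^1(M,\R)$ and take $C=\{y=0,\ p=0\}$ of codimension $n+1$, so that the dimension count $n+n<2n+1$ forces transversality of $j^1_F$ to $C$ to degenerate into \emph{avoidance} of $C$; this is the classical ``jet transversality to a high-codimension stratum implies the stratum is missed'' argument. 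Both proofs buy the same corollary; yours has the advantage of generalizing mechanically (raising the jet order shows non-genericity of higher-order degeneracies with no new ideas), while the paper's is more economical in that it stays in the smallest possible jet space and never needs the disjointness reduction. The one step you state loosely --- that the negation of a generic property ``holds on at most a nowhere dense set'' --- is the same shorthand the paper itself uses in Section~\ref{sec:sing}; the precise statement is that the complement of a residual set is meager and hence cannot itself be residual in a Baire space, which is all that is needed to conclude non-genericity.
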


In other words, the corollary deals with the intuitive fact that if $u(x)=0$, then generically $u'(x)\neq 0$. 
\begin{proof}
 Consider the space of 0-jets $J^0(M,\R)$. In this space, let $C$ be the set of 0-jets which vanish, i.e. $C=(x,0) \subset J^0$. A function $u$ is transversal to this set if either it does not vanish, or where it vanishes we have that  the matrix $$\left[ \begin{matrix} 1 & 1 \\ 0 & \frac{\partial f}{\partial x}\end{matrix}\right] $$ is of full rank. Hence, transversality to $C$ at a zero implies that the derivative of the function is non-zero. The result is thus a consequence of Theorem~\ref{th:thom}. 
\end{proof}


\end{document}